\newtheorem{theorem}{Theorem}[section]
\newtheorem{proposition}[theorem]{Proposition}
\newtheorem{corollary}[theorem]{Corollary}
\theoremstyle{definition}
\newtheorem{example}[theorem]{Example}
\theoremstyle{remark}
\newtheorem{remark}[theorem]{Remark}
\newcommand\lm{\lambda}
\newcommand\al{\alpha}
\newcommand\be{\beta}
\newcommand\de{\delta}
\newcommand{\rd}{{\rm d}}
\newcommand{\Ran}{\mathop{\rm Ran}}
\newcommand{\Ker}{\mathop{\rm Ker}}
\newcommand{\N}{{\mathbb N}}
\newcommand{\R}{{\mathbb R}}
\newcommand{\C}{{\mathbb C}}
\newcommand{\Z}{{\mathbb Z}}
\newcommand\eps{\varepsilon}
\newcommand\beq{\begin{equation}}
\newcommand\eeq{\end{equation}}
\newcommand{\beqnt}{\begin{equation*}}
\newcommand{\eeqnt}{\end{equation*}}
\newcommand{\dom}{\mathcal{D}}
\newcommand{\dist}{\mathrm{dist}}
\newcommand\re{\mathrm{Re}\hspace{0.3mm}}
\newcommand\im{\mathrm{Im}\hspace{0.3mm}}
\newcommand\I{\mathrm{i}}
\newcommand{\set}[2]{\{#1 : #2 \}}
\newcommand{\SSet}[2]{\left\{  #1 : #2 \right\}  }
\newcommand{\sigmaess}{\sigma_{\mathrm{ess}}}
\renewcommand\H{\mathcal{H}}
\newcommand\A{\mathcal{A}}
\newcommand\rref[1]{{\rm \ref{#1}}}
\numberwithin{equation}{section}
\renewcommand{\emptyset}{\varnothing}
\begin{document}

\title[Non-symmetric perturbations of self-adjoint operators]{Non-symmetric perturbations of \\ self-adjoint operators}

\author{Jean-Claude Cuenin} 
\address{Mathematisches Institut, Ludwig-Maximilians-Universit\"at M\"unchen, Theresienstr.~39, 80333~M\"unchen, Germany}
\email{cuenin@math.lmu.de}

\author{Christiane Tretter}  
\address{Mathematisches Institut, Universit\"at Bern, Sidlerstr.~5, 3012~Bern, Switzerland}
\email{tretter@math.unibe.ch}

\date{\today}
\subjclass{47A10; 47A55, 81Q12}
\keywords{Spectrum, perturbation theory, non-selfadjoint operator, spectral gap, resolvent estimate, essential spectrum, Dirac operator, periodic system, Hamiltonian.}

\begin{abstract}
We investigate the effect of non-symmetric relatively bounded per\-turbations on the spectrum of self-adjoint operators. In particular, we estab\-lish stability theorems for one or infinitely many spectral gaps along~with corresponding resolvent estimates. These results extend, and improve, classical perturbation results by Kato and by Gohberg/Kre\u\i n. Further, we~study essential spectral gaps and perturbations exhibiting additional structure with respect to the unperturbed operator; in the latter case, we can even allow~for perturbations with relative bound $\ge\!1$. The generality of our results is illustra\-ted by several applications, massive and massless Dirac operators,  point-cou\-pled periodic systems, and two-channel Hamiltonians \vspace{-3.9mm}with~dissipation.
\end{abstract}

\maketitle

\section{Introduction}

Analytical information about the spectra and resolvents of non-self-adjoint linear operators is of great importance for numerical analysis and 
corresponding non-linear problems. Recent papers on spectral problems for non-selfadjoint differential operators have emphasized the need for universal information on the non-real spectrum and on eigenvalues in spectral gaps as well as for resolvent estimates in spectral gaps, see e.g.\ \cite{MR2727810}, \cite{MR3084346}, \cite{MR2198968}. Moreover, operators with spectral gaps have been featuring in modern applications such as periodic quantum graphs or photonic crystals, see e.g.\ \cite{MR3323566}, \cite{MR3291353}, \cite{ELT15}. However, even for perturbations of self-adjoint operators there are only a few general results, usually restricted to bounded or symmetric or relatively compact perturbations, or more specific results e.g.\ for perturbations of Schr\"odinger operators. 

In this paper we require neither of these conditions and study the behaviour of the spectrum under perturbations that are merely relatively bounded. 
Our main results concern the stability of spectral gaps and corresponding resolvent estimates, estimates of the non-real spectrum, the behaviour of essential spectral gaps and of isolated parts of the discrete spectrum, and the effects of additional structures of the perturbation. All results are formulated in terms of the relative boundedness constants of the perturbation; e.g.\ for the case of infinitely many spectral gaps, we establish conditions on the lengths of the spectral gaps and bands ensuring that infinitely many spectral gaps remain open or, more strongly, at most finitely many spectral gaps close. Our results on structured perturbations seem to be the first that even allow for perturbations with relative bound $\ge\!1$.
  
The outline of the paper is as follows. In Section \ref{sec:2} we study the effect of a relatively bounded perturbation $A$ on the spectrum $\sigma(T)$ of a self-adjoint 
operator~$T$. We show that, if $A$ has $T$-bound $\de_A<1$, then the non-real part of $\sigma(T+A)$ lies between two hyperbolas and we establish a ``gap condition" ensuring that a spectral gap $(\al_T,\beta_T)\subset \R$ of $T$ gives rise to a stable spectral free strip of $T+A$. This means that there exists a non-empty  subinterval $(\al_{T+A},\be_{T+A})\subset (\al_T,\beta_T)$ such \vspace{-0.5mm} that 
\begin{equation}
\label{jan03}
 \sigma(T+sA) \cap \{ z\in\C: \al_{T+A} < \re z < \be_{T+A} \} = \emptyset \quad \mbox{for all } s\in[0,1];
\vspace{-0.5mm}
\end{equation}  
an analogous result is proved for essential spectral gaps.
Moreover, we derive a resolvent estimate for $T+A$ in this spectral free strip. The shape of the hyperbolas, the gap condition, the bounds $\al_{T+A}$, $\be_{T+A}$, and the resolvent estimate are all formulated in terms of the relative boundedness constants of $A$ with respect to $T$ and the endpoints $\al_T$, $\be_T$ of the unperturbed spectral gap. 

Similar spectral estimates for form-bounded perturbations were proved in \cite{Veselic}; however, for non-symmetric perturbations there is no general relation between relative boundedness and relative form-boundedness. We also mention that our results extend, and improve, classical perturbation results by Kato and by Goh\-berg/Krein, see \cite[Theorems V.4.10/11]{Ka}, \cite[Lemma V.10.1]{GK69}.
 
In Section~\ref{Section Infinitely many spectral gaps} we study the stability of infinitely many spectral gaps $(\al_n,\be_n)$ of $T$ which tend to $\infty$. 
We derive conditions on $\al_n$, $\beta_n$ ensuring that $T+A$ has infinitely many stable spectral free strips 
or that, more strongly, at most finitely many spectral gaps of $T$ close under the perturbation $A$. 
A necessary condition for the latter is that the spectral gap 
lengths $l_n\!=\!\be_n\!-\!\al_n$ diverge if $A$ has $T$-bound $\de_A\!=\!0$ and that they diverge exponentially if $\de_A\!>\!0$. 
These results also apply if two spectral gaps are separated by a single spectral point, e.g.\ if $T$ has compact~resolvent.
 
In Section \ref{Section Structured perturbations} we focus on perturbations that exhibit different additional structures with respect to the unperturbed operator $T$, e.g.\ if $T$ commutes with a self-adjoint involution $\tau$, then $A$ is supposed to anti-commute with $\tau$, or vice versa. Using operator matrix techniques, we are able to tighten the spectral estimates derived in Section \ref{sec:2} and, at the same time, weaken the gap condition to such an extent that we can even allow for perturbations $A$ with $T$-bound $\de_A \ge 1$. 

For the special case of symmetric perturbations, we complement \eqref{jan03} by showing that if e.g.\ $\al_{T+A}\!=\!\al_T\!+\!\de_{T+A}$, then $\sigma(T\!+\!A)\cap\big(\al_T\!-\!\de_{T+A},\al_T\!+\!\de_{T+A}\big)\ne \emptyset$ and that if $T$ has eigenvalues of total  multiplicity $m<\infty$ in an essential spectral gap $(\al_T,\beta_T)$, then $T+A$ has eigenvalues of total algebraic multiplicity  $\le m$ in $(\al_{T+A},\be_{T+A})$. Further, we prove monotonicity results for spectral gaps and essential spectral gaps for a semi-bounded perturbation $A$.

Finally, in Section \ref{Section Applications} we apply our results to Dirac operators, massless in $\R^2$ and massive with Coulomb-like potentials in $\R^3$, to point-coupled periodic systems on manifolds, and to two-channel scattering systems with dissipation.

The following notation will be used throughout the paper. For a closed linear operator $T$ on a Hilbert space $\H$ with domain $\dom(T)$, we denote 
the kernel and range by $\Ker T$ and $\Ran T$, respectively, and the spectrum~and re\-solvent set by $\sigma(T)$ and $\rho(T)$, respectively.
Moreover, $T$ is called Fredholm if $\Ker T$ is finite dimensional and $\Ran T$ is finite co-dimensional; the essential spectrum of $T$ is defined as $\sigmaess(T):=\{ \lm\in \C: T-\lm \mbox{ is not Fredholm}\}$. 
If $T\!=\!T^*$ is self-adjoint and $J\!\subset\!\R$ is an interval, $E_T(J)$ denotes the corresponding spectral projection. 


\section{Perturbation of spectra and spectral gaps}
\label{sec:2}

In this section we study non-symmetric relatively bounded perturbations of self-adjoint operators and their effect on the spectrum. In particular, 
we estimate the non-real spectrum and the change of spectral gaps under such perturbations. All spectral enclosures are supplied with corresponding resolvent estimates. 

If $T$ and $A$ are linear operators in a Banach or Hilbert space, then 
$A$ is called \emph{$T$-bounded} if  $\dom(T) \subset \dom(A)$ and there exist $a'$,~$b'\geq 0$ such that
\begin{equation}
\label{c1} 
  \quad \|Ax\| \le a'\|x\| + b'\|Tx\|, \quad x\in \dom(T).
\end{equation}
The infimum $\delta_A$ of all $b'\ge 0$ such that there is an $a'\ge 0$ with \eqref{c1} 
or, equivalently, the infimum $\delta_A$ of all $b\ge 0$ such that there is an $a\ge 0$ with
\beq\label{c2} 
  \|Ax\|^2\leq a^2\|x\|^2+b^2\|Tx\|^2, \quad x\in\dom(T),
\eeq
is called \emph{$T$-bound of~$A$} (see \cite[Section IV.1.1]{Ka}). Note that \eqref{c2} implies \eqref{c1} with $a'=a$, $b'=b$, while \eqref{c1} implies \eqref{c2} with $a^2=a'^2(1+1/\eps)$, $b^2=b'^2(1+\eps)$ for arbitrary $\eps>0$.

Classical perturbation theorems of Kato for spectra of self-adjoint operators $T$ either assume that the perturbation $A$ is bounded or that $T$ is semi-bounded and $A$ is symmetric (see e.g.\ \cite[Theorems V.4.10/11]{Ka}). A much less known theorem of Gohberg and Krein assumes that $A$ is relatively compact (see \cite[Lemma V.10.1]{GK69}).

The following new result requires neither of these conditions.

\begin{theorem}
\label{new-non-symmetric gap}
Let $T$ be a self-adjoint operator in a Hilbert space $\H$ and let $A$ be $T$-bounded with $T$-bound $<1$ and with $a$,~$b \ge 0$, $b<1$, 
\vspace{1mm} as in \eqref{c2}.

\begin{enumerate}
\item[{\rm i)}] Then the  spectrum of $\,T+A$ lies between two hyperbolas, more precisely,
\beq\label{eq. general sa}
\sigma(T+A) \cap \bigg\{ z\in {\mathbb C} : |\im z |^2 > \displaystyle{\frac{a^{2} + b^{2} |\re  z|^2}{1-b^{2}}} \bigg\} = \emptyset.
\eeq

\item[{\rm ii)}]  If $\,T$ has a spectral gap $(\alpha_T,\beta_T)\subset \R$, i.e.\ $\sigma(T) \cap (\alpha_T,\beta_T) = \emptyset$ 
with $\alpha_T$, $\beta_T \in \sigma(T)$, and if
\beq\label{non symmetric gap remains}
 \sqrt{a^2+b^2\alpha_T^2} + \sqrt{a^2+b^2\beta_T^2} < \beta_T - \alpha_T,
\eeq
then $\,T+A$ has a stable spectral free strip $(\alpha_{T+A},\beta_{T+A}) + \I\R \subset \C$, i.e. 
\beq
\label{new non symmetric gap}
\sigma(T+ s A)\cap
\SSet{z\in\C}{\alpha_{T+ A} < \re z < \beta_{T+A}}
= \emptyset, \quad s\in[0,1],
\eeq
with 
\begin{equation}
\label{spectral-shifts}
  \alpha_{T+A} := \alpha_T+\sqrt{a^2+b^2\alpha_T^2}, \quad \beta_{T+A}:=\beta_T-\sqrt{a^2+b^2\beta_T^2};
\end{equation}
if $A$ is symmetric, then
\begin{align}
\label{eq. lower semicont gap 2} 
  \hspace*{14mm} & \sigma(T\!+\!A) \cap K_{\sqrt{a^2\!+\!b^2\al_T^2}}(\al_T) 
  \!\ne\! \emptyset, \quad
  \sigma(T\!+\!A) \cap  K_{\sqrt{a^2\!+\!b^2\be_T^2}}(\be_T) 
  \!\ne\! \emptyset. \vspace{-3mm}
\end{align}

\item[{\rm iii)}] If $\,T$ has an essential spectral gap $(\alpha_T,\beta_T)\subset \R$, i.e.\ $\sigma_{\rm ess}(T) \cap (\alpha_T,\beta_T) = \emptyset$ 
with $\alpha_T$, $\beta_T \in \sigma_{\rm ess}(T)$, and \eqref{non symmetric gap remains} holds, then the strip
\beq
\label{new essential gap}
\sigma(T+A)\cap
\SSet{z\in\C}{\alpha_{T+ A} < \re z < \beta_{T+A}}
\eeq
consists of at most countably many isolated eigenvalues of finite algebraic multiplicity
which may accumulate at most at the points $\alpha_{T+A}$, $\beta_{T+A}$.
\end{enumerate}
\end{theorem}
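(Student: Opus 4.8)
The plan is to reduce everything to a resolvent estimate for $T$ itself and a standard Neumann–series / Fredholm perturbation argument. First I would establish the basic inequality: for a self-adjoint operator $T$ and $z=x+\I y\in\C$ one has $\|(T-z)^{-1}\|\le\|(T-x)^{-1}\|$ and, more quantitatively, $\|(T-z)u\|^2=\|(T-x)u\|^2+y^2\|u\|^2\ge (\dist(x,\sigma(T))^2+y^2)\|u\|^2$, so that for $x$ in a spectral gap (or essential spectral gap) the operator $(T-z)^{-1}$ exists and satisfies $\|(T-z)^{-1}\|\le(\dist(x,\sigma(T))^2+y^2)^{-1/2}$; simultaneously $\|T(T-z)^{-1}\|\le 1 + |z|\,\|(T-z)^{-1}\|$, and in fact a sharper bound can be read off because $\|T(T-z)^{-1}u\|^2=\int\frac{t^2}{(t-x)^2+y^2}\,\rd\|E_T(t)u\|^2$, which is what will feed the factor $\sqrt{a^2+b^2|\cdot|^2}$ rather than $a+b|\cdot|$.

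Part i): for $z$ with $x=\re z$ arbitrary, combine \eqref{c2} with the spectral-theorem bounds to get $\|A(T-z)^{-1}\|\le\big(a^2\|(T-z)^{-1}\|^2+b^2\|T(T-z)^{-1}\|^2\big)^{1/2}$, and then estimate $\|(T-z)^{-1}\|^2\le(\dist(x,\sigma(T))^2+y^2)^{-1}\le y^{-2}$ and $\|T(T-z)^{-1}\|^2\le\sup_t\frac{t^2}{(t-x)^2+y^2}$. A short calculus argument shows $\sup_t\frac{t^2}{(t-x)^2+y^2}\le\frac{x^2+y^2}{y^2}$ (the sup is attained at $t=(x^2+y^2)/x$), hence $\|A(T-z)^{-1}\|^2\le\frac{a^2+b^2(x^2+y^2)}{y^2}$. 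If the right-hand side is $<1$, i.e. exactly when $y^2(1-b^2)>a^2+b^2x^2$, the Neumann series for $(I+A(T-z)^{-1})^{-1}$ converges, so $T+A-z=(I+A(T-z)^{-1})(T-z)$ is boundedly invertible and $z\notin\sigma(T+A)$; this is \eqref{eq. general sa}.

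Part ii): fix $x\in(\alpha_{T+A},\beta_{T+A})$. For such $x$ one has $\dist(x,\sigma(T))\ge\min\{x-\alpha_T,\beta_T-x\}$, and the definition \eqref{spectral-shifts} together with \eqref{non symmetric gap remains} guarantees $x-\alpha_T>\sqrt{a^2+b^2\alpha_T^2}\ge\sqrt{a^2+b^2 x^2}$ is false in general, so one must be slightly more careful: I would instead bound $\|A(T-z)^{-1}\|$ directly by $\sqrt{a^2+b^2t^2}/\sqrt{(t-x)^2+y^2}$ under the integral and split the spectral integral over $\sigma(T)\cap(-\infty,\alpha_T]$ and $\sigma(T)\cap[\beta_T,\infty)$. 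On the lower piece $t\le\alpha_T<x$ the map $t\mapsto\frac{a^2+b^2t^2}{(t-x)^2}$ is decreasing for $t$ to the left of its critical point, so it is maximized at $t=\alpha_T$, giving the bound $\frac{a^2+b^2\alpha_T^2}{(x-\alpha_T)^2}$; similarly the upper piece is bounded by $\frac{a^2+b^2\beta_T^2}{(\beta_T-x)^2}$. Hence $\|A(T-z)^{-1}\|\le\max\{\frac{\sqrt{a^2+b^2\alpha_T^2}}{x-\alpha_T},\frac{\sqrt{a^2+b^2\beta_T^2}}{\beta_T-x}\}$, and for $x$ strictly between $\alpha_{T+A}=\alpha_T+\sqrt{a^2+b^2\alpha_T^2}$ and $\beta_{T+A}=\beta_T-\sqrt{a^2+b^2\beta_T^2}$ both fractions are $<1$; \eqref{non symmetric gap remains} is precisely what makes this interval nonempty. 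The Neumann series argument then gives $z\notin\sigma(T+A)$, and replacing $A$ by $sA$ (whose $T$-bound constants are $sa\le a$, $sb\le b$) leaves all estimates valid, yielding \eqref{new non symmetric gap}. For the symmetric addendum \eqref{eq. lower semicont gap 2}: if $\sigma(T+A)$ avoided, say, the closed disk $\overline{K_r(\alpha_T)}$ with $r=\sqrt{a^2+b^2\alpha_T^2}$, then since $T+A$ is self-adjoint its resolvent at $\alpha_T$ would satisfy $\|(T+A-\alpha_T)^{-1}\|\le 1/r$; but then the same Neumann-series computation run backwards, perturbing $T+A$ by $-A$ to recover $T$, would force $\alpha_T\notin\sigma(T)$ provided $\|(-A)(T+A-\alpha_T)^{-1}\|<1$, and $\|A(T+A-\alpha_T)^{-1}\|\le(a^2\|(T+A-\alpha_T)^{-1}\|^2+b^2\|(T+A)(T+A-\alpha_T)^{-1}\|^2)^{1/2}$; using $\|(T+A)(T+A-\alpha_T)^{-1}\|\le 1+|\alpha_T|/r$ one checks this is $<1$ exactly by the choice of $r$, a contradiction.

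Part iii): here $\dist(\cdot,\sigma_{\rm ess}(T))$ replaces $\dist(\cdot,\sigma(T))$ in the sense that the resolvent is no longer bounded but $T-z$ is Fredholm of index $0$ for $\re z\in(\alpha_T,\beta_T)$. I would argue as follows: for $z$ in the strip \eqref{new essential gap} with $\re z$ outside the \emph{discrete} spectrum of $T$ in the gap, the operator $(T-z)^{-1}$ still exists; more robustly, write $T=T_0\oplus T_1$ according to the spectral subspace for $\sigma(T)\cap(\alpha_T,\beta_T)$ (finitely or countably many eigenvalues of finite multiplicity, isolated) and its complement, note $A$ is $T$-bounded hence $(T_0\oplus T_1)$-bounded with the same constants, and observe that $A(T-z)^{-1}$ restricted to the $T_1$-part is norm-$<1$ by the part-ii estimate (since $\sigma(T_1)\subset(-\infty,\alpha_T]\cup[\beta_T,\infty)$), while the $T_0$-part contributes a finite-rank, hence compact, operator. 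Therefore $T+A-z=(I+K(z)+C(z))(T-z)$ with $\|K(z)\|<1$ on the relevant strip and $C(z)$ compact and analytic in $z$; so $I+K(z)+C(z)$ is an analytic Fredholm family, invertible for $\re z\in(\alpha_{T+A},\beta_{T+A})$ away from a discrete set (by the analytic Fredholm theorem, using that it is invertible for $z$ with large $|\im z|$ by part i)), and at the exceptional points the inverse has poles of finite rank. This yields that the strip \eqref{new essential gap} meets $\sigma(T+A)$ only in isolated eigenvalues of finite algebraic multiplicity, possibly accumulating only at the endpoints $\alpha_{T+A}$, $\beta_{T+A}$ where the bound $\|K(z)\|<1$ degenerates.

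The main obstacle I anticipate is the sharpness in part ii): getting the factor $\sqrt{a^2+b^2\alpha_T^2}$ (with the endpoint $\alpha_T$ inside the square root) rather than a cruder $\sqrt{a^2+b^2\dist(x,\sigma(T))^{-2}\cdot(\ldots)}$ requires the monotonicity analysis of $t\mapsto\frac{a^2+b^2t^2}{(t-x)^2}$ on each half-line and the observation that for $x$ in the shifted gap the relevant critical points lie outside $(-\infty,\alpha_T]$ and $[\beta_T,\infty)$ respectively; this is where \eqref{non symmetric gap remains} is genuinely used and where a careless estimate loses the result. The essential-gap part iii) additionally needs the analytic Fredholm theorem to be applicable on the (possibly unbounded) strip, which is fine since the strip is connected and $I+K(z)+C(z)$ is invertible somewhere in it.
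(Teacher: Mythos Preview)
Your approach to parts i) and ii) (the main claim \eqref{new non symmetric gap}) is essentially the paper's: bound $\|A(T-z)^{-1}\|$ via $\sup_{t\in\sigma(T)}\sqrt{a^2+b^2t^2}/|t-z|$ and invoke a Neumann series. One small slip: your monotonicity claim for $t\mapsto (a^2+b^2t^2)/(t-x)^2$ on $(-\infty,\alpha_T]$ is not quite right (the limit at $-\infty$ is $b^2$, which can exceed the value at $\alpha_T$ when $x$ is near $\beta_{T+A}$), so the correct supremum is $\max\{b,\,\sqrt{a^2+b^2\alpha_T^2}/(x-\alpha_T),\,\sqrt{a^2+b^2\beta_T^2}/(\beta_T-x)\}$ as the paper records. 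Since $b<1$ this does not affect the conclusion.

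The genuine gap is in your argument for the symmetric addendum \eqref{eq. lower semicont gap 2}. Your ``reverse perturbation'' bounds $\|A(T+A-\alpha_T)^{-1}\|$ via $(a^2\|(T+A-\alpha_T)^{-1}\|^2+b^2\|(T+A)(T+A-\alpha_T)^{-1}\|^2)^{1/2}$, but this would require $\|Ax\|^2\le a^2\|x\|^2+b^2\|(T+A)x\|^2$, which is \emph{not} the hypothesis --- the constants of $A$ relative to $T+A$ are different from those relative to $T$. Even granting your inequality, the numerics fail: with $r=\sqrt{a^2+b^2\alpha_T^2}$ and $\|(T+A)(T+A-\alpha_T)^{-1}\|\le 1+|\alpha_T|/r$ one gets $a^2/r^2+b^2(1+|\alpha_T|/r)^2$, and subtracting $1=(a^2+b^2\alpha_T^2)/r^2$ leaves $b^2(r^2+2r|\alpha_T|)/r^2>0$, so you never obtain $\|A(T+A-\alpha_T)^{-1}\|<1$ at the required radius. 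The paper instead takes a singular sequence $(u_n)$ for $T$ at $\lambda=\alpha_T$, writes $T+A-\lambda=(I+(\I\nu+A)(T-\lambda-\I\nu)^{-1})(T-\lambda-\I\nu)$, and lets $\nu\to 0$ to extract $\|(T+A-\lambda)^{-1}\|\ge 1/\delta_{A,\lambda}$ with $\delta_{A,\lambda}\le\sqrt{a^2+b^2\lambda^2}$; this avoids any appeal to $(T+A)$-boundedness of $A$.

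For part iii), your analytic-Fredholm outline is in the right spirit, but the assertion that ``the $T_0$-part contributes a finite-rank operator'' is unjustified: the spectral subspace $E_T((\alpha_T,\beta_T))\H$ can be infinite-dimensional (eigenvalues may accumulate at $\alpha_T$ or $\beta_T$). The paper handles this by truncating to $P_n=E_T((\alpha_T+\eps_n,\beta_T-\delta_n))$ of finite rank, writing $T+A$ as a degenerate (hence relatively compact) perturbation of $\operatorname{diag}(T_1,T_2+A_{22})$, invoking stability of $\sigma_{\rm ess}$ under relatively compact perturbations, and then letting $n\to\infty$; this is cleaner than trying to push your compactness argument through on the full subspace.
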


Resolvent estimates accompanying the spectral enclosures in Theorem \ref{new-non-symmetric gap} i) and ii) may be found in Proposition~\ref{new-resolvent estimates} below.

\begin{proof}[{\bf Proof of Theorem \ref{new-non-symmetric gap}}]
i) Since $T$ is self-adjoint, we have the resolvent estimates (see \cite[V.(3.16), (3.17)]{Ka}) 
\beq
\label{resolvent estimates s.a.}
\begin{aligned}
\|(T-z)^{-1}\|&=\frac{1}{\dist(z,\sigma(T))}\leq \frac 1{|\im z|},
\\ 
\|T(T-z)^{-1}\|&=\sup_{t\in\sigma(T)}\frac{|t|}{|t-z|} \leq\frac{|z|}{|\im z|},
\end{aligned}
\quad z\in\C\setminus \R.
\eeq
Therefore, for $z\in\C$ belonging to the second set in \eqref{eq. general sa},
\begin{equation}
\label{ref1}
\|A(T-z)^{-1}\|^2 \leq
a^2\|(T-z)^{-1}\|^2+b^2\|T(T-z)^{-1}\|^2\leq \frac{a^2+b^2|z|^2}{|\im z|^2} <1.
\end{equation}
Now \eqref{eq. general sa} follows from the stability result for bounded invertibility (see \cite[Theorem IV.3.17]{Ka}).

ii)
Let $z = \mu + \I \nu$ with $\mu \in (\alpha_T,\beta_T) \subset \rho(T)$ and $\nu\in\R$. Then $z \in \rho(T)$, 
\beq\label{T(T-nu)inv}
 \|(T-\mu)(T-(\mu+\I\nu))^{-1}\|\leq\sup_{t\in\sigma(T)}\frac{|t-\mu|}{\sqrt{|t-\mu|^2+\nu^2}}\leq 1,
\eeq
and hence
\begin{align}
\label{ineq1}
  \|A(T\!-\!z)^{-1}\| 
  \leq \|A(T\!-\!\mu)^{-1}\| \,\|(T\!-\!\mu)(T\!-\!(\mu+\I\nu))^{-1}\|
  \leq \|A(T\!-\!\mu)^{-1}\|. 
\end{align}
For $x\in \dom (T)$, we have $\|Tx\|=\| \,|T| x\|$ and hence, by \eqref{c2}, 
\[
\|Ax\|^2 \le a^2 \|x\|^2 + b^2 \|\, |T| x\|^2 = \big( (a^2 + b^2 |T|^2) x,x \big) = \big\| \sqrt{a^2 + b^2 |T|^2} \,x \big\|^2.
\]
This yields that, for all $s\in [0,1]$, 
\begin{equation}
\label{ref2a}
\begin{array}{rl}
 \| sA(T\!-\!\mu)^{-1} \| \le \|A(T\!-\!\mu)^{-1} \| 
 \!\!\!&\le \big\|\sqrt{a^2 \!+\! b^2 |T|^2}\, (T-\mu)^{-1} \big\|  \\
 &= \sup\limits_{t\in\sigma(T)} \!\dfrac{\sqrt{a^2 \!+\! b^2t^2}}{|t-\mu|}.
\end{array} 
\end{equation}
If $\re z \!=\! \mu \!\in\! \big( \alpha_T \!+\! \sqrt{a^2\!+\!b^2 \alpha_T^2}, \beta_T \!-\! \sqrt{a^2\!+\!b^2 \beta_T^2} \big)$,
then an elementary comput\-ation shows that 
\begin{equation}
\label{ref2b}
 \sup_{t\in\sigma(T)}\frac{\sqrt{a^2+b^2t^2}}{|t-\mu|}=\max\left\{b,\frac{\sqrt{a^2\!+\!b^2\al_T^2}}{\mu-\al_T},\frac{\sqrt{a^2\!+\!b^2\be_T^2}}{\be_T-\mu}\right\}<1.
\end{equation}
Now \eqref{new non symmetric gap} follows from \cite[Theorem IV.3.17]{Ka}.

For the case that $A$ is symmetric, we first prove that if $\lm\in\sigma(T)$, then
\begin{equation}
\label{eq:HS}
 \sigma(T+A)\cap(\lm-\delta_{A,\lm},\lm+\delta_{A,\lm})\neq\emptyset, 
 \quad 
 \delta_{A,\lm}:=\mathop{\limsup_{\nu\in\R\setminus\{0\}}}_{\nu\to 0}|\nu|\,\|A(T-\lm-\I\nu)^{-1}\|.
\vspace{-1mm} 
\end{equation}
If $\lm \in \sigma(T+A)$, there is nothing to prove, so we suppose that $\lm \not\in \sigma(T+A)$. 
Since $T$ is self-adjoint, there exists a singular sequence for $T$ and~$\lm$, i.e.\ a sequence $(u_n)_{n\in\N}\subset \dom(T)$, $\|u_n\|= 1$, with $\|(T-\lm)u_n\|\to 0$, $n\to\infty$ (see e.g.\ \cite[Satz~8.24~b)]{Weid}). For every $\nu\in\R\setminus\{0\}$ we have $\lm+\I\nu\in\rho(T)$~and
\[
T+A-\lm=\big(I+\left(\I\nu+A\right)\left(T-\lm-\I\nu\right)^{-1}\big)\left(T-\lm-\I\nu\right),
\]
which implies that 
\begin{equation}
\label{nikolaus1}
\|(T+A-\lm)u_n\|\leq \left(1+\|(\I\nu+A)(T-\lm-\I\nu)^{-1}\|\right) (\|(T-\lm)u_n\|+|\nu|).
\end{equation}
Similarly as above, we obtain that
\begin{equation}
\label{nikolaus2}
 \|A (T-\lm-\I\nu)^{-1}\| \le \| \sqrt{a^2\!+\!b^2|T|^2} (T-\lm-\I\nu)^{-1} \| = 
 \!\!\sup_{t\in\sigma(T)} \!\sqrt{\frac{a^2\!+\!b^2t^2}{(t\!-\!\lm)^2\!+\!\nu^2}} \!=:\!  s(\nu) \!<\! \infty,
\end{equation}
since the function over which the supremum is taken is continuous on all of $\R$ as $\nu\ne 0$ and tends to $b$ for $t\to\pm\infty$. Now we choose $n_\eps\in\N$ such that
\[
 \|(T-\lm)u_{n_\eps} \| < \frac{|\nu|}{2+s(\nu)}.
\]
Using this in \eqref{nikolaus1} and $\|u_{n_\eps}\|=1$, we conclude that
\begin{align*}
  \|(T+A-\lm)u_{n_\eps}\| 
  & \leq (2+s(\nu)) \|(T-\lm)u_{n_\eps}\| + \big( 2+ \|A (T-\lm-\I\nu)^{-1}\| \big)\,|\nu|\\ 
  & \leq \big( 3|\nu| + |\nu|\,\|A (T-\lm-\I\nu)^{-1}\| \big) \|u_{n_\eps}\|
\end{align*}
and hence
\[
 \|(T+A-\lm)^{-1}\|  \ge \frac 1{3|\nu| + |\nu|\,\|A (T-\lm-\I\nu)^{-1}\|}.
\]
Taking the limes inferior over all $\nu\in\R\setminus\{0\}$ with $\nu\to 0$ on both sides, we find
\[
 \|(T+A-\lm)^{-1}\|  \ge \mathop{\liminf_{\nu\in\R\setminus\{0\}}}_{\nu\to 0} \frac 1{3|\nu| + |\nu|\,\|A (T-\lm-\I\nu)^{-1}\|} 
 = \frac 1{\delta_{A,\lm}}.
\]
Since $A$ has $T$-bound $<1$, $T+A$ is self-adjoint by the Kato-Rellich theorem (see e.g.\ \cite[Theorem~V.4.3]{Ka}) and thus
\[
  \frac{1}{\dist(\lm,\sigma(T+A))}=\|(T+A-\lm)^{-1}\|\geq \frac 1{\delta_{A,\lm}}. 
\]
This proves \eqref{eq:HS}.

Now we are ready to prove claim \eqref{eq. lower semicont gap 2}. 
One can show that for the supremum $s(\nu)$ in \eqref{nikolaus2} the point $t(\nu)$ where it is attained tends to $\lm$ for $\nu\to 0$ and thus
\begin{align*}
\delta_{A,\lm}
 \!=\!   \mathop{\limsup_{\nu\in\R\setminus\{0\}}}_{\nu\to 0} |\nu|\,\|A(T\!-\!\lm\!-\!\I\nu)^{-1}\|
&  \le \mathop{\limsup_{\nu\in\R\setminus\{0\}}}_{\nu\to 0} |\nu| \sqrt{\!\frac{a^2\!+\!b^2t(\nu)^2}{(t(\nu)\!-\!\lm)^2\!+\!\nu^2}}
   = \sqrt{a^2\!+\!b^2 \lm^2}.
\end{align*}
This, together with \eqref{eq:HS} applied to the points $\alpha_T$, $\beta_T\in\sigma(T)$, yields \eqref{eq. lower semicont gap 2}.

iii) 
Let $(\eps_n)_{n\in\N}$, $(\delta_n)_{n\in\N} \subset [0,\infty)$, be sequences with $\eps_n\to 0$, $\delta_n\to 0$, $n\to\infty$, $\al_T\!+\!\eps_n$, $\beta_T\!-\!\delta_n \!\in\! \sigma(T)$ 
and so that the spectral projection $P_n\!:=\!E_T((\al_T\!+\!\eps_n,\be_T\!-\!\delta_n))$ has finite rank; note that we can choose e.g.\ $\eps_n=0$, $n\in\N$, if $\al_T$ is no accumulation point of eigenvalues of $T$. 
If we set $\H_1\!:=\!\Ran P_n$, $\H_2\!:=\!\H_1^{\perp}=\Ker P_n$, and denote by $T_i$ are the restrictions of $T$ to $\H_i$, 
then $\dim\H_1<\infty$ and 
\[
 T=\operatorname{diag}(T_1,T_2), \quad  
 \dom(T)=\left(\dom(T)\cap\H_1\right)\oplus\left(\dom(T)\cap\H_2\right)=\dom(T_1)\oplus\dom(T_2),
\]
in $\H=\H_1\oplus\H_2$. Since $\dom(T)\subset\dom(A)$, we can also decompose 
\[
T+A=\begin{pmatrix}T_1+A_{11}&A_{12}\\A_{21}&T_2+A_{22}\end{pmatrix},\quad \dom(T+A)=\dom(T_1)\oplus\dom(T_2),
\]
in $\H=\H_1\oplus\H_2$ where $A_{11}=P_nA|_{\dom(T)\cap\H_1}$, etc. If we write 
\beqnt\label{perturbation problem essential spectral gap}
T+A=\mathcal{T}+\mathcal{A},\quad \mathcal{T}=\begin{pmatrix}T_1&0\\0&T_2+A_{22}\end{pmatrix},\quad\mathcal{A}=\begin{pmatrix}A_{11}&A_{12}\\A_{21}&0\end{pmatrix},
\eeqnt
then $\A$ is degenerate since $\dim\H_1\!<\!\infty$. It is not difficult to check that \eqref{c2}~implies 
\beq\label{AijTjbounded}
\|A_{ij}x\|^2\leq a^2\|x\|^2+ b^2 \|T_j x\|^2, \quad x\in \dom(T_j), \quad i,j=1,2,
\eeq
and, for $|\eta|$ sufficiently large, 
\[
 \|A_{12}(T_2+A_{22}-\I\eta)^{-1}\|\leq\frac{\|A_{12}(T_2-\I\eta)^{-1}\|}{1-\|A_{22}(T_2-\I\eta)^{-1}\|}
 <\infty.
\]
Thus $\A$ is $\mathcal{T}$-bounded and $\A\mathcal{T}^{-1}$ is degenerate, hence compact. Since the essential spectrum is stable under relatively compact perturbations (see e.g.\ \cite[Theorem IX.2.1]{EE}), we \vspace{-1mm} have
\[
 \sigmaess(T+A) =\, \sigmaess(\mathcal{T}+\A)=\sigmaess(\mathcal{T})=\sigmaess(T_2+A_{22})\subset\sigma(T_2+A_{22}).
\]
By construction, 
$ \sigmaess(T_1)\!=\!\emptyset$ and $\sigma(T_2)\cap(\al_T\!+\!\eps_n,\be_T\!-\!\delta_n )\!=\!\emptyset$ with $\al_T\!+\!\eps_n$, $\beta_T\!-\!\delta_n \in \sigma(T_2)$. 
Applying ii) to $T_2$ and $A_{22}$ and letting $n\to\infty$, we conclude that $\sigmaess(T+A) \cap \big( (\alpha_{T+A},\beta_{T+A}) + \I\R \big) = \emptyset$.
By i) the strip 
$(\alpha_{T\!+\!A},\beta_{T+A}) + \I\R \!\subset\! \C$ contains points of $\rho(T)$ and hence iii) follows e.g.\ from \cite[Theorem XVII.2.1]{GGK1}. 
\end{proof}

\begin{remark}
\label{Kato-const}
There is an analogue of Theorem \ref{new-non-symmetric gap} in terms of the constants $a'\!$, $b'\!$ in \eqref{c1} 
with $a'\!+b'|\alpha_T|$, $a'\!+b'|\beta_T|$ in place of $\sqrt{a^2\!+\!b^2\alpha_T^2}$, $\sqrt{a^2\!+\!b^2\beta_T^2}$ everywhere; in particular, 
\eqref{non symmetric gap remains} becomes
\beq
\label{old non symmetric gap remains}
 (a'+b'|\alpha_T|) + (a'+b'|\beta_T|) < \beta_T - \alpha_T.
\eeq
In fact, since \eqref{c1} implies \eqref{c2} with $a^2\!=\!a'^2(1+\eps)$, $b^2\!=\!b'^2(1+\frac 1 \eps)$ for arbitrary $\eps\!>\!0$, 
we can use Theorem \ref{new-non-symmetric gap} with each such pair of constants and observe that~e.g.
\[
 \beta_T-\min_{\eps>0}\sqrt{a'^2(1+\eps)+\!b'^2\Big(1+\frac 1 \eps\Big)\beta_T^2} = \beta_T - (a'\!+b'|\beta_T|),
\]
where the minimum is attained at $\eps = \frac{b'|\beta_T|}{a'}$. Note that the corresponding condition \eqref{old non symmetric gap remains} 
with the constants $a$, $b$ from \eqref{c2}, which may also be used as $a'$, $b'$ in \eqref{c1}, is only 
sufficient but not necessary for \eqref{non symmetric gap remains}. 
\end{remark}

\begin{remark}
Theorem \ref{new-non-symmetric gap} iii) remains valid if we replace $\sigmaess(T)$ by any of the sets $\sigma_{\rm e,i}(T)$, $i=1,2,3,4$, 
defined in \cite[Section IX.1]{EE} which are all stable under relatively compact perturbations. 
\end{remark}

The spectral inclusion established in Theorem \ref{new-non-symmetric gap} is illustrated in Figure~\ref{picture perturbation of spectral gap} for the typical case $a\neq 0$, $b\neq 0$. 
Note that the asymptotes of the hyperbolas in Theorem \ref{new-non-symmetric gap} i) are given by
$|\im z| = \pm \arcsin b \, |\re z|$; if $A$ is bounded, i.e.\ $a=\|A\|$, $b=0$, the hyperbolas degenerate into the lines $|\im z| = \|A\|$, 
in agreement with the classical perturbation result (comp.\ \cite[Section V.4.3]{Ka}).

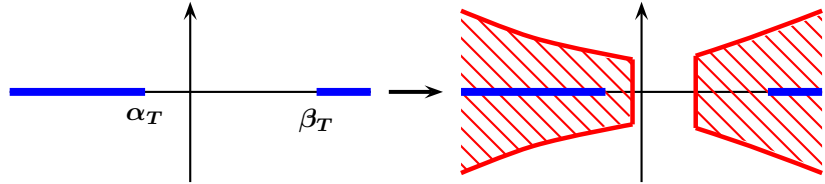
\begin{figure}[h]
 \begin{center}
 \psset{unit=.24cm} 
\begin{pspicture}(-25,0)(20,10)
\psline[linestyle=solid,arrowsize=5pt](-25,5)(-5,5)
\psline[linestyle=solid,arrowsize=5pt]{->}(-15,0)(-15,10)
\psline[linestyle=solid,linewidth=3pt,linecolor=blue](-25,5)(-17.5,5)
\psline[linestyle=solid,linewidth=3pt,linecolor=blue](-8,5)(-5,5)
\uput[d](-8,4.8){\boldmath $\beta_T$}
\uput[d](-17.5,4.8){\boldmath $\alpha_T$}
\psline[linestyle=solid,linewidth=1.5pt,arrowsize=5pt]{->}(-4,5)(-1,5) 
\psline[linestyle=solid,arrowsize=5pt](0,5)(20,5)
\psline[linestyle=solid,arrowsize=5pt]{->}(10,0)(10,10)
\pscurve[linestyle=none,linewidth=.1pt,hatchcolor=red,fillstyle=vlines]
(20,9.3)(16,7.8)(13,6.8)(13,6.3)(13,5)(13,4)(13,3.2)(16,2.2)(20,.7)
\pscurve[linestyle=none,linewidth=.1pt,hatchcolor=red,fillstyle=vlines](0,.5)(4,2)(7,2.7)(8,3.1)(9,3.3)(9.5,4)(9.5,5)(9.5,6)(9.5,6.8)(8,6.9)(7,7.3)(4,8)(0,9.5)
\psline[linestyle=solid,linewidth=1.7pt,linecolor=red](13,3)(13,7)
\psline[linestyle=solid,linewidth=1.7pt,linecolor=red](9.5,3.2)(9.5,6.8)
\pscurve[linestyle=solid,linewidth=1.7pt,linecolor=red](13,3)(16,2)(20,.5)
\pscurve[linestyle=solid,linewidth=1.7pt,linecolor=red](9.5,3.2)(4,2)(0,0.5)
\pscurve[linestyle=solid,linewidth=1.7pt,linecolor=red](13,7)(16,8)(20,9.5)
\pscurve[linestyle=solid,linewidth=1.7pt,linecolor=red](9.5,6.8)(4,8)(0,9.5)
\psline[linestyle=solid,linewidth=3pt,linecolor=blue](17,5)(20,5)
\psline[linestyle=solid,linewidth=3pt,linecolor=blue](0,5)(8,5)
\end{pspicture}
\vspace{-2mm}
\end{center}
\caption{\!\!Spectral inclusion in Theorem \ref{new-non-symmetric gap} ($a\!\neq\!0$,~$b\!\neq\!0$)\label{picture perturbation of spectral gap}.}
\label{fig:1}
\end{figure}

Theorem \ref{new-non-symmetric gap} extends, and improves, two classical perturbation results, firstly, the well-known stability theorem of T.\ Kato for symmetric perturbations of semi-bounded selfadjoint operators (see \cite[Theorem V.4.11]{Ka}) and, secondly, the less known perturbation result of M.G.\ Kre\u\i n and I.C.\ Gohberg on relatively compact perturbations of self-adjoint operators (see \cite[Lemma V.10.1]{GK69}).

The first of the following two corollaries shows that Theorem \ref{new-non-symmetric gap} ii) generalizes Kato's stability result \cite[Theorem V.4.11]{Ka} to non-symmetric $A$ and improves Kato's lower bound for the spectrum of $T+A$ even for symmetric $A$.

\begin{corollary}
\label{cor. semi-bounded}
If $\,T$ in Theorem {\rm \ref{new-non-symmetric gap} ii)} is bounded below with lower bound $\beta_T$, 
then $T+A$ is $m$-accretive with 
\begin{equation}
\label{kato-new}
 \re\hspace{0.5mm}\sigma(T+A) \ge\beta_{T+A} = \begin{cases} \beta_T - \sqrt{a^2\!+\!b^2\beta_T^2}, & \mbox{ for  $a$, $b$ as in \eqref{c2}},\\
                             \beta_T - (a'\!+b'\beta_T), & \mbox{ for  $a'\!$, $b'\!$ as in \eqref{c1}}.
               \end{cases}              
\end{equation}
\end{corollary}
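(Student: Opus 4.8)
The plan is to deduce Corollary \ref{cor. semi-bounded} from Theorem \ref{new-non-symmetric gap} ii) in the degenerate case where the left endpoint of the spectral gap is sent to $-\infty$, and then to promote the spectral enclosure so obtained to $m$-accretivity by a Hille--Yosida argument.

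I would first establish $\re\sigma(T+A)\ge\beta_{T+A}$ by repeating the proof of ii) with $\alpha_T=-\infty$. Because $T=T^*$ with $\sigma(T)\subset[\beta_T,\infty)$, every real $\mu<\beta_T$ lies in $\rho(T)$, and exactly as in \eqref{T(T-nu)inv}--\eqref{ref2a} one gets, for any $z$ with $\re z=\mu$,
\[
 \|A(T-z)^{-1}\|\le\|A(T-\mu)^{-1}\|\le\big\|\sqrt{a^2+b^2|T|^2}\,(T-\mu)^{-1}\big\|=\sup_{t\ge\beta_T}\frac{\sqrt{a^2+b^2t^2}}{t-\mu}.
\]
An elementary discussion of the last supremum (on $[\beta_T,\infty)$ the integrand has at most one interior critical point, which is a minimum, and its limit at $+\infty$ is $b$) shows it equals $\max\{b,\sqrt{a^2+b^2\beta_T^2}/(\beta_T-\mu)\}$, and this is $<1$ exactly when $\mu<\beta_T-\sqrt{a^2+b^2\beta_T^2}=\beta_{T+A}$, using $b<1$. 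Stability of bounded invertibility \cite[Theorem IV.3.17]{Ka} then gives $\{z:\re z<\beta_{T+A}\}\subset\rho(T+A)$, i.e.\ $\re\sigma(T+A)\ge\beta_{T+A}$. Alternatively one may apply ii) itself to the genuine gaps $(\alpha_T,\beta_T)$ with $\alpha_T\in\sigma(T)$ and $|\alpha_T|$ large---\eqref{non symmetric gap remains} then holds because $b<1$, and $\alpha_{T+A}=\alpha_T+\sqrt{a^2+b^2\alpha_T^2}\to-\infty$---after first replacing $T,A$ by $T\oplus\alpha_T,\ A\oplus0$ if $\sigma(T)$ has no such points, which alters neither \eqref{c2} nor $\sigma(T+A)$ except for the added point $\alpha_T$. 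The formula with $a',b'$ then follows by optimising over the admissible pairs $(a,b)$ precisely as in Remark \ref{Kato-const}.

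For $m$-accretivity note that $T+A$ is closed on the dense domain $\dom(T)$ (since $A$ has $T$-bound $<1$) and $(-\infty,\beta_{T+A})\subset\rho(T+A)$ by the step above, so by Hille--Yosida it remains to verify the resolvent estimate $\|(T+A-\mu)^{-1}\|\le(\beta_{T+A}-\mu)^{-1}$ for real $\mu<\beta_{T+A}$, which is equivalent to $-(T+A-\beta_{T+A})$ generating a contraction semigroup. Inserting into the Neumann-series bound $\|(T+A-\mu)^{-1}\|\le\|(T-\mu)^{-1}\|(1-\|A(T-\mu)^{-1}\|)^{-1}$ the exact values $\|(T-\mu)^{-1}\|=(\beta_T-\mu)^{-1}$ and the bound on $\|A(T-\mu)^{-1}\|$ from above makes the factor $\beta_T-\mu$ cancel and yields the estimate; this is the resolvent estimate accompanying Theorem \ref{new-non-symmetric gap} ii), specialised to the present half-plane. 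For \emph{symmetric} $A$ the same conclusion is immediate by another route: $T+A$ is then self-adjoint by the Kato--Rellich theorem and, by \eqref{new non symmetric gap} in its $\alpha_T=-\infty$ form, bounded below by $\beta_{T+A}$, so $T+A-\beta_{T+A}\ge0$ is automatically $m$-accretive. The step that requires genuine care, and the only one not reducing verbatim to Theorem \ref{new-non-symmetric gap} and Remark \ref{Kato-const}, is this sharp resolvent bound: the self-adjointness of $T$---that is, the identities \eqref{resolvent estimates s.a.}--\eqref{T(T-nu)inv}---must be used so that the constant multiplying $(\beta_{T+A}-\mu)^{-1}$ comes out equal to $1$ rather than $(1-b)^{-1}$.
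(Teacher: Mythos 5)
Your derivation of the half-plane enclosure $\re\sigma(T+A)\ge\beta_{T+A}$ is correct and is precisely the paper's route: Theorem \ref{new-non-symmetric gap}~ii) and its proof applied with $\alpha_T\to-\infty$ (the supremum in \eqref{ref2b} collapsing to $\max\{b,\sqrt{a^2+b^2\beta_T^2}/(\beta_T-\mu)\}$), combined with Remark \ref{Kato-const} for the $(a',b')$ version; your direct-sum device for manufacturing genuine gaps is harmless but unnecessary, since the paper simply runs the proof of ii) with $\alpha_T$ sent to $-\infty$.

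The $m$-accretivity step, however, has a genuine gap. The cancellation you invoke in the Neumann bound $\|(T+A-\mu)^{-1}\|\le(\beta_T-\mu)^{-1}\bigl(1-\|A(T-\mu)^{-1}\|\bigr)^{-1}$ occurs only when the maximum above is attained by $\sqrt{a^2+b^2\beta_T^2}/(\beta_T-\mu)$, i.e.\ for $\mu\ge\beta_T-\sqrt{a^2+b^2\beta_T^2}/b$. Below this threshold (a nonempty half-line whenever $b>0$) the bound reads $\bigl((1-b)(\beta_T-\mu)\bigr)^{-1}$, and the required inequality $(1-b)(\beta_T-\mu)\ge\beta_{T+A}-\mu$ is equivalent to $b(\beta_T-\mu)\le\sqrt{a^2+b^2\beta_T^2}$ --- exactly the negation of the case you are in. So the Hille--Yosida contraction estimate is not verified on all of $(-\infty,\beta_{T+A})$; the same dichotomy appears in the paper's own Proposition \ref{new-resolvent estimates}~ii) and Remark \ref{res-est-detailed} in the limit $\alpha_T\to-\infty$, where the prefactor tends to $(1-b)^{-1}$ to the left of $\zeta$. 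The gap is not merely technical: taking $\H=\bigoplus_n\C^2$, $T=\bigoplus_n\operatorname{diag}(0,M_n)$ with $M_n\to\infty$ and $A=\bigoplus_n\bigl(\begin{smallmatrix}0&-bM_n\\0&0\end{smallmatrix}\bigr)$, one has $\|Ax\|=b\|Tx\|$, $\beta_T=\beta_{T+A}=0$ and $\sigma(T+A)\subset[0,\infty)$, yet the numerical range of $T+A$ is unbounded to the left, so $T+A$ is not even quasi-accretive and the contraction resolvent bound must fail somewhere. Hence the sharp estimate you single out as "the only step requiring genuine care" cannot be established by your argument, nor, in the strict Kato sense, at all without extra hypotheses (e.g.\ $b=0$, or symmetry of $A$, for which your Kato--Rellich fallback does work). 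Be aware that the paper's own proof is a one-line reference to Theorem \ref{new-non-symmetric gap} and Remark \ref{Kato-const} and supplies no argument for $m$-accretivity beyond the resolvent-set inclusion.
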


\begin{proof}
The claim follows from Theorem \ref{new-non-symmetric gap} and its proof, applied with $\alpha_T$ tending to $-\infty$, and Remark \ref{Kato-const}.
\end{proof}

\begin{remark}
Corollary \ref{cor. semi-bounded} generalizes Kato's semiboundedness stability result \cite[Theorem V.4.11]{Ka} to non-symmetric perturbations. Moreover, if $A$ is symmetric it improved Kato's lower \vspace{-1mm} bound 
\[
 \sigma(T+A) \ge \beta_T - \max \Big\{ \frac{a'}{1-b'}, a' + b' |\beta_T| \Big\},
\vspace{-1mm} 
\]
which is worse than the bound in \eqref{kato-new} if $\frac{a'}{1-b'} > a' \!+\! b' |\beta_T|$, i.e.\ $a'\!+\!b'|\beta_T|> |\beta_T|$. 
\end{remark}

The next corollary shows that Theorem \ref{new-non-symmetric gap} i) does not only yield Gohberg and Kre\u\i n's result \cite[Lemma V.10.1]{GK69} as a special case where the perturbation $A$ is $T$-compact, but it generalizes their result to $T$-bounded $A$ with $T$-bound $0$.
Note that if $A$ is $T$-compact, then $A$ has $T$-bound $0$  since $H$ is reflexive and $T$ is self-adjoint, thus closed (see \cite[Corollary III.7.7]{EE}).  

\begin{corollary}
\label{cor:gk}
If $A$ in Theorem {\rm \ref{new-non-symmetric gap} i)} has 
$T$-bound $0$, then for every $\eps > 0$ there exists an $r_\eps >0$ such that
\[
  \sigma(T+A) \subset K(0,r_\eps) \cup  \Sigma_\eps \cup ( -\Sigma_\eps)
\]
where $K(0,r_\eps) := \{ z \in \C: |z|\le r_\eps\}$ is the closed ball of radius $r_\eps$ centred at $0$ and
\[
  \Sigma_\eps := \{ z\in \C \setminus\{0\} : | \arg z | \le \eps \}
\]
is the sector of opening angle $2\eps$ lying symmetrically around the positive real axis.  
\end{corollary}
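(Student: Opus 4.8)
The plan is to apply Theorem \ref{new-non-symmetric gap} i) not with one fixed pair $(a,b)$, but with a whole family of pairs, exploiting that the $T$-bound of $A$ is $0$. Since $\delta_A=0$, for every $b>0$ there is an $a=a(b)\ge 0$ such that \eqref{c2} holds; in particular $A$ is $T$-bounded for every such pair. Applying \eqref{eq. general sa} with the pair $(a(b),b)$ gives
\[
 \sigma(T+A) \cap \Big\{ z\in\C : |\im z|^2 > \frac{a(b)^2 + b^2|\re z|^2}{1-b^2} \Big\} = \emptyset ,
\]
and the complement of that excluded region is $\{ z : |\im z|^2 \le (a(b)^2 + b^2|\re z|^2)/(1-b^2)\}$, which is the region bounded by the two branches of a hyperbola with asymptotic slopes $\pm b/\sqrt{1-b^2} = \pm\tan(\arcsin b)$. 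Taking the intersection over a sequence $b_k\downarrow 0$ of these hyperbolic regions is the heart of the argument.

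The key steps, in order, are as follows. First, fix $\eps>0$ and choose $b>0$ small enough that $b/\sqrt{1-b^2} < \tan\eps$, so the asymptotic slopes of the hyperbola lie strictly inside the sector $\Sigma_\eps\cup(-\Sigma_\eps)$. Second, with this $b$ fixed, pick the corresponding $a=a(b)$ from \eqref{c2}; the hyperbolic region $\{|\im z|^2 \le (a^2+b^2|\re z|^2)/(1-b^2)\}$ is not contained in $\Sigma_\eps\cup(-\Sigma_\eps)$ near the origin (it contains a neighbourhood of $0$, including a piece of the imaginary axis), but outside a sufficiently large ball it is: one checks by an elementary estimate that there is $r_\eps>0$ such that every $z$ with $|z|>r_\eps$ and $|\im z|^2 \le (a^2+b^2|\re z|^2)/(1-b^2)$ satisfies $|\arg z|\le\eps$ or $|\arg(-z)|\le\eps$. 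Third, combine: by Theorem \ref{new-non-symmetric gap} i), any $z\in\sigma(T+A)$ lies in the hyperbolic region; if in addition $|z|>r_\eps$, then $z\in\Sigma_\eps\cup(-\Sigma_\eps)$. Hence $\sigma(T+A)\subset K(0,r_\eps)\cup\Sigma_\eps\cup(-\Sigma_\eps)$, which is the claim.

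The only slightly delicate point is the elementary geometry in the second step, namely making the dependence of $r_\eps$ on $\eps$ (through the fixed choices of $b$ and then $a=a(b)$) explicit enough to justify that, outside a ball of that radius, the hyperbolic region sits inside the sector. Concretely, on the branch $|\im z|^2 = (a^2+b^2|\re z|^2)/(1-b^2)$ one has $|\tan(\arg z)| = |\im z|/|\re z| = \sqrt{(a^2 + b^2\mu^2)/((1-b^2)\mu^2)}$ with $\mu=|\re z|$, which decreases to $b/\sqrt{1-b^2}<\tan\eps$ as $\mu\to\infty$; so it suffices to take $\mu$ (equivalently $|z|$) large enough that this quantity is $\le\tan\eps$, and this determines $r_\eps$. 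Everything else is a direct application of part i) of the theorem; no new operator-theoretic input is needed. I would also remark, as the paragraph preceding the corollary does, that the hypothesis of $T$-compactness used by Gohberg and Kre\u\i n forces $\delta_A=0$, so this genuinely generalizes \cite[Lemma V.10.1]{GK69}.
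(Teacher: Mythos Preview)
Your proposal is correct and follows essentially the same route as the paper: fix $\eps$, use $\delta_A=0$ to choose $b$ small enough that the asymptotic slope $b/\sqrt{1-b^2}$ of the hyperbola is strictly below the sector slope, then use the corresponding $a=a(b)$ to find a radius beyond which the hyperbolic region lies inside $\Sigma_\eps\cup(-\Sigma_\eps)$. The paper's proof differs only in cosmetic bookkeeping (it compares $|\im z|/|\re z|$ with $\eps$ rather than $\tan\eps$, and makes the passage from ``$|\re z|$ large'' to ``$|z|$ large'' explicit via the case split $|\re z|\gtrless r_0$), which you gloss over with ``equivalently $|z|$'' but which is indeed immediate from the hyperbola bound.
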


\begin{proof}
Let $\eps >0$ be arbitrary. Since $A$ has $T$-bound $0$, there exist $a_\eps$, $b_\eps \ge 0$ with $b_\eps$ so small \vspace{-1mm} that
\[
 \|Ax\|^2\le a_\eps^2 \|x\|^2 + b_\eps^2 \|Tx\|^2, \ \ x\in \dom(T), \quad \mbox{ and } \quad \frac{b_\eps^2}{1-b_\eps^2} < \frac {\eps^{2}}2.
\vspace{-1mm}
\]
Now let $z\in\sigma(T+A)$. 
If $|\re z|^2 \geq r_0^2 := \frac {a_\eps^2}{1-b_\eps^2} \frac 2{\eps^{2}}$, then Theorem \ref{new-non-symmetric gap} i) shows that
\[
\frac{|\im z|^2}{|\re z|^2}\leq\frac{a_\eps^2}{1-b_\eps^2} \frac 1{r_0^2}+\frac{b_\eps^2}{1-b_\eps^2}<\eps^{2}
\]
and hence $z\in\Sigma_\eps \cup ( -\Sigma_\eps)$; if $|\re z|^2 \leq r_0^2$, then Theorem \ref{new-non-symmetric gap} i) shows that 
\[
 |z|^2 \leq r_0^2 + |\im z|^2 \leq r_0^2 + \frac{a_\eps^2+b_\eps^2 r_{0}^2}{1-b_\eps^2} =:r_\eps^{2} 
\] 
and hence $z\in K(0,r_\eps)$. Since $r_\eps \geq r_0$, the claim follows.
\end{proof}

\begin{remark}
If $T$ is self-adjoint and $A$ is \emph{$p$-subordinate} to $T$ with $0\le p<1$, i.e.\ $\dom(T) \subset \dom(A)$ and there exists $c\ge 0$ such that
\[
   \|Ax\| \le c \|x\|^{1-p} \|Tx\|^p, \quad x \in \dom(T),
\]
then $A$ is $T$-bounded with $T$-bound $0$. Hence Corollary \ref{cor:gk} implies the spectral enclosure in \cite[Lemma 3.5]{MR3169033}, which was proved there for the more general case that $T$ is bisectorial with angle $\theta \in [0,\pi/2)$ and radius $r\ge 0$, see \cite[Definition~2.7]{MR3169033}.
\end{remark}

\begin{proposition}
\label{new-resolvent estimates}
Let $T$ be a self-adjoint operator in a Hilbert space $\H$ and let $A$ be $T$-bounded with $T$-bound $<1$ and with $a$,~$b \ge 0$, $b<1$, 
\vspace{1mm} as in \eqref{c2}.

{\rm i)}  For $z\in\C$ such that $|\im z|^2 > \frac{a^{2} + b^{2} |\re  z|^2}{1-b^{2}}$, we have
\[
  \|(T+A-z)^{-1}\|\leq \frac{1}{|\im z|-\sqrt{a^2+b^2|z|^2}}.
\]
{\rm ii)}  For $z\in\C$ such that $\alpha_{T+ A} \!<\! \re z \!<\! \beta_{T+A}$ with $\alpha_{T+ A}$, $\beta_{T+A}$ as in \eqref{spectral-shifts}, we have
\[
\|(T\!+\!A\!-\!z)^{-1} \| \!\leq\!\! 
\frac1{\min\{\re z\!-\!\alpha_T,\beta_T\!-\!\re z\}^2 \!+\! (\im z)^2} \frac 1{\!\!1\!-\!\max\!\left\{\!b,\frac{\!\!\sqrt{a^2\!+\!b^2\al_T^2}\!\!}{\re z-\al_T},\frac{\!\!\sqrt{a^2\!+\!b^2\be_T^2}\!}{\be_T-\re z}\right\}\!\!}.
\]
\end{proposition}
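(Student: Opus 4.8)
The plan is to derive both estimates from the standard second‑resolvent factorization combined with a Neumann series, feeding in the operator‑norm bounds on $A(T-z)^{-1}$ and $(T-z)^{-1}$ that were already produced in the proof of Theorem~\ref{new-non-symmetric gap}. The abstract ingredient is: if $z\in\rho(T)$ and $\|A(T-z)^{-1}\|<1$, then $T+A-z=\bigl(I+A(T-z)^{-1}\bigr)(T-z)$ is boundedly invertible, $(T+A-z)^{-1}=(T-z)^{-1}\bigl(I+A(T-z)^{-1}\bigr)^{-1}$, and hence, by the Neumann series (cf.\ \cite[Theorem~IV.3.17]{Ka}),
\[
 \|(T+A-z)^{-1}\|\ \le\ \frac{\|(T-z)^{-1}\|}{\,1-\|A(T-z)^{-1}\|\,}.
\]
So for each part it only remains to insert the appropriate upper bound for the numerator and lower bound for $1-\|A(T-z)^{-1}\|$ on the region in question; both are already at hand.

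For i), let $z$ satisfy $|\im z|^2>\tfrac{a^2+b^2|\re z|^2}{1-b^2}$. The computation \eqref{ref1} in the proof of Theorem~\ref{new-non-symmetric gap}~i) gives $\|A(T-z)^{-1}\|\le\sqrt{a^2+b^2|z|^2}/|\im z|<1$, while \eqref{resolvent estimates s.a.} gives $\|(T-z)^{-1}\|\le 1/|\im z|$. Substituting into the displayed inequality and multiplying numerator and denominator by $|\im z|$ yields $\|(T+A-z)^{-1}\|\le\bigl(|\im z|-\sqrt{a^2+b^2|z|^2}\bigr)^{-1}$, the denominator being positive precisely because of the hypothesis on~$z$.

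For ii), write $z=\mu+\I\nu$ with $\mu=\re z\in(\al_{T+A},\be_{T+A})$. The chain \eqref{ineq1}, \eqref{ref2a}, \eqref{ref2b} from the proof of Theorem~\ref{new-non-symmetric gap}~ii) bounds $\|A(T-z)^{-1}\|\le\|A(T-\mu)^{-1}\|$ by $\max\bigl\{b,\sqrt{a^2+b^2\al_T^2}/(\mu-\al_T),\sqrt{a^2+b^2\be_T^2}/(\be_T-\mu)\bigr\}<1$, which is exactly the quantity in the second factor of ii). For the numerator I would use self‑adjointness of $T$ in the form $\|(T-z)^{-1}\|=\dist(z,\sigma(T))^{-1}$ and observe that, since $\sigma(T)\cap(\al_T,\be_T)=\emptyset$ with $\al_T,\be_T\in\sigma(T)$, the spectral point of $T$ nearest to $\mu$ is $\al_T$ or $\be_T$, whence $\dist(z,\sigma(T))^2=\min\{\mu-\al_T,\be_T-\mu\}^2+\nu^2$. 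Inserting these two estimates into the Neumann‑series bound gives ii).

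There is no serious obstacle: the whole argument is just the assembly of estimates that were established along the way to Theorem~\ref{new-non-symmetric gap}. The only two points that deserve a sentence of care are that the hypothesis $\|A(T-z)^{-1}\|<1$ of the Neumann‑series step holds on exactly the two regions appearing in i) and ii) — which is precisely what parts i) and ii) of Theorem~\ref{new-non-symmetric gap} verified — and the sharp evaluation of $\dist(z,\sigma(T))$ through the endpoints $\al_T$, $\be_T$ of the gap rather than the cruder lower bound $\max\{\min\{\mu-\al_T,\be_T-\mu\},|\nu|\}$.
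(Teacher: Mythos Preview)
Your proposal is correct and follows essentially the same approach as the paper, which simply states that both claims follow from the estimates of $\|A(T-z)^{-1}\|$ already obtained in the proof of Theorem~\ref{new-non-symmetric gap} together with \cite[Theorem~IV.3.17]{Ka}. One small remark: your argument in ii) actually yields the first factor with a square root, $1/\sqrt{\min\{\re z-\al_T,\be_T-\re z\}^2+(\im z)^2}$, which is the intended bound (cf.\ Remark~\ref{res-est-detailed} and Corollary~\ref{symmetric gap}); the missing square root in the displayed statement of ii) is a typographical slip.
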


\begin{proof}
Both claims follow from the estimates of $\|A(T-z)^{-1}\|$ established in the proofs of Theorem \ref{new-non-symmetric gap} i) and ii) and  \cite[Theorem IV.3.17]{Ka}.
\end{proof}

\vspace{-2mm}

\begin{remark}
\label{res-est-detailed}
The explicit form of the resolvent estimate in Proposition \ref{new-resolvent estimates}~ii) is different for the cases $|\alpha_T| \le |\beta_T|$ and $|\alpha_T| \ge |\beta_T|$. More precisely, if we define $\zeta \in (\alpha_{T+A},\beta_{T+A})$ by
\begin{align*}
  \zeta :\!&= \alpha_T \!+\! \frac{\beta_T-\alpha_T}{\sqrt{a^2\!+\!b^2\alpha_T^2}\!+\!\sqrt{a^2\!+\!b^2\beta_T^2}} \sqrt{a^2\!+\!b^2\alpha_T^2} \\
         &= \beta_T \!-\! \frac{\beta_T-\alpha_T}{\sqrt{a^2\!+\!b^2\alpha_T^2}\!+\!\sqrt{a^2\!+\!b^2\beta_T^2}} \sqrt{a^2\!+\!b^2\beta_T^2},
\end{align*}
then, if $|\alpha_T|\le|\beta_T|$, we have $\alpha_{T+A} \le \zeta \le \frac{\alpha_T+\beta_T}2$ and 
\begin{align*}
   & \|(T\!+\!A\!-\!z)^{-1} \| \!\leq\! \\
   &\leq \begin{cases}
   \!\dfrac1{\!\sqrt{(\re z\!-\!\alpha_T)^2 \!+\! (\im z)^2}} \dfrac {\re z - \alpha_T}{\re z \!-\! \alpha_{T\!+\!A}},\hspace{-3mm}  
     & \hspace{2mm} \re z \!\in\!\! \big( \alpha_{T+A}, \zeta \big],\\[3mm]
   \!\dfrac1{\!\sqrt{(\re z\!-\!\alpha_T)^2 \!+\! (\im z)^2}} \dfrac {\beta_T-\re z}{\beta_{T\!+\!A} \!-\! \re z},\hspace{-3mm}  
     & \hspace{2mm} \re z \!\in\!\! \big( \zeta, \min \{\frac{\alpha_T\!+\!\beta_T}2,\beta_{T+A} \} \big], \\[3mm]
   \!\dfrac1{\!\sqrt{(\beta_T\!-\!\re z)^2 \!+\! (\im z)^2}} \dfrac {\beta_T-\re z}{\beta_{T\!+\!A} \!-\! \re z},\hspace{-3mm} 
     & \hspace{2mm} \re z \!\in\!\! \big( \!\min \{ \frac{\alpha_T\!+\!\beta_T}2,\beta_{T\!+\!A} \}, \beta_{T\!+\!A} \big);
   \end{cases}
\end{align*}
if $|\alpha_T|\ge|\beta_T|$, we have $\frac{\alpha_T+\beta_T}2 \le \zeta \le \beta_{T+A}$ and 
\begin{align*}
   &\|(T\!+\!A\!-\!z)^{-1} \| \!\leq\! \\ 
   &\leq \begin{cases}
   \!\dfrac1{\!\sqrt{(\re z\!-\!\alpha_T)^2 \!+\! (\im z)^2}} \dfrac {\re z - \alpha_T}{\re z \!-\! \alpha_{T\!+\!A}},\hspace{-3mm}  
     & \hspace{2mm} \re z \!\in\!\! \big( \alpha_{T+A}, \min \{ \alpha_{T\!+\!A},\!\frac{\alpha_T\!+\!\beta_T}2 \} \big],\\[3mm]
   \!\dfrac1{\!\sqrt{(\beta_T\!-\!\re z)^2 \!+\! (\im z)^2}}  \dfrac {\re z - \alpha_T}{\re z \!-\! \alpha_{T\!+\!A}},\hspace{-3mm}  
     & \hspace{2mm} \re z \!\in\!\! \big( \min \{ \alpha_{T\!+\!A},\frac{\alpha_T\!+\!\beta_T}2 \},\zeta \big], \\[3mm]
   \!\dfrac1{\!\sqrt{(\beta_T\!-\!\re z)^2 \!+\! (\im z)^2}} \dfrac {\beta_T-\re z}{\beta_{T\!+\!A} \!-\! \re z},\hspace{-3mm} 
     & \hspace{2mm}  \re z \!\in\!\! \big( \zeta, \beta_{T\!+\!A} \big).
   \end{cases}
\end{align*}
\end{remark}

\smallskip

\begin{remark}
For perturbations in quadratic form sense, results similar to those in Theorem \ref{new-non-symmetric gap} were proved by K.\ Veselic in \cite{Veselic}, 
under corresponding assumptions on the relative form bounds of the perturbation. If $A$ is symmetric, our assumption \eqref{c2} implies that 
$|A|\leq a+b|T|$ in  quadratic form sense, which is the assumption in \cite{Veselic}. However, for non-symmetric perturbations there is no general relation between relative boundedness and relative form-boundedness.
\end{remark}

If $T$ has a spectral gap that is symmetric to the origin, the claims in Theorem~\ref{new-non-symmetric gap}~ii) 
and, in particular, the resolvent estimates in Remark \ref{res-est-detailed} simplify as follows.

\begin{corollary}
\label{symmetric gap}
Let $T$ be a self-adjoint operator in a Hilbert space $\H$ and let $A$ be $T$-bounded with $T$-bound $<1$ and with $a$,~$b \ge 0$, $b<1$ as in \eqref{c2}.
If $\,T$ has a symmetric spectral gap $(-\beta_T,\beta_T) \subset \R$ with $\beta_T>0$, i.e.\ $\sigma(T) \cap (-\beta_T,\beta_T) = \emptyset$ and $\beta_T\in\sigma(T)$, and if
\beq\label{symmetric gap remains}
 \sqrt{a^2+b^2\beta_T^2}<\beta_T,
\eeq
then $\,T\!+\!A$ has a stable spectral free strip $(-\beta_{T\!+\!A},\beta_{T\!+\!A})+\I\R \subset \C$ with $\beta_{T\!+\!A}=\beta_T\!-\! \sqrt{a^2\!+\!b^2\beta_T^2}$ as in \eqref{spectral-shifts}, more precisely,
\beq
\label{basic symmetric gap}
\sigma(T+sA)\cap\SSet{z\in\C}
{|\re z|<\beta_{T+A}}
= \emptyset, \quad s\in[0,1],
\eeq
and, for $z \in\C$ belonging to the second set in \eqref{basic symmetric gap},
\beq\label{resolvent estimate symmetric gap}
\|(T+A-z)^{-1} \|\leq \frac{1}{\sqrt{(\beta_T-|\re z|)^2+|\im z|^2}}\cdot\frac{\beta_T-|\re z|}{\beta_{T+A}-|\re z|}.
\eeq
In particular, $T+A$ is bisectorial.
\end{corollary}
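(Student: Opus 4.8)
The plan is to derive Corollary \ref{symmetric gap} as the special case $\alpha_T = -\beta_T$ of Theorem \ref{new-non-symmetric gap} ii) and Proposition \ref{new-resolvent estimates} ii), and then simply simplify the resulting expressions. First I would observe that putting $\alpha_T = -\beta_T$ into the gap condition \eqref{non symmetric gap remains} gives $\sqrt{a^2 + b^2\beta_T^2} + \sqrt{a^2 + b^2\beta_T^2} < 2\beta_T$, which is exactly \eqref{symmetric gap remains}; and the spectral shifts in \eqref{spectral-shifts} become $\alpha_{T+A} = -\beta_T + \sqrt{a^2+b^2\beta_T^2} = -\beta_{T+A}$ and $\beta_{T+A} = \beta_T - \sqrt{a^2+b^2\beta_T^2}$. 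Hence \eqref{new non symmetric gap} immediately yields \eqref{basic symmetric gap}.

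Next I would turn to the resolvent estimate. Here I would invoke Proposition \ref{new-resolvent estimates} ii). With $\alpha_T = -\beta_T$, the first factor $\min\{\re z - \alpha_T, \beta_T - \re z\}^2 + (\im z)^2$ becomes $(\beta_T - |\re z|)^2 + |\im z|^2$, since $\min\{\re z + \beta_T, \beta_T - \re z\} = \beta_T - |\re z|$ for $|\re z| < \beta_T$. For the second factor, the three quantities in the maximum are $b$, $\sqrt{a^2+b^2\beta_T^2}/(\re z + \beta_T)$, and $\sqrt{a^2+b^2\beta_T^2}/(\beta_T - \re z)$; the larger of the latter two is $\sqrt{a^2+b^2\beta_T^2}/(\beta_T - |\re z|)$, and a short check using \eqref{symmetric gap remains} shows this already dominates $b$ (indeed $\sqrt{a^2+b^2\beta_T^2} \ge b\beta_T > b(\beta_T - |\re z|)$). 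So the maximum equals $\sqrt{a^2+b^2\beta_T^2}/(\beta_T - |\re z|)$, and $1$ minus it equals $(\beta_T - |\re z| - \sqrt{a^2+b^2\beta_T^2})/(\beta_T - |\re z|) = (\beta_{T+A} - |\re z|)/(\beta_T - |\re z|)$, which gives \eqref{resolvent estimate symmetric gap} after taking the reciprocal. (Alternatively one can read this off directly from Remark \ref{res-est-detailed} with $\zeta = 0$.)

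Finally, for bisectoriality: combining \eqref{eq. general sa} from Theorem \ref{new-non-symmetric gap} i) — which confines $\sigma(T+A)$ to the region $|\im z|^2 \le (a^2 + b^2|\re z|^2)/(1-b^2)$, a set whose two components lie in sectors around the positive and negative real axes with half-opening angle $\arcsin b < \pi/2$ — with \eqref{basic symmetric gap} for $s=1$, which removes the strip $|\re z| < \beta_{T+A}$ around the imaginary axis, shows that $\sigma(T+A)$ lies in two disjoint closed sectors, one in $\{\re z \ge \beta_{T+A}\}$ and one in $\{\re z \le -\beta_{T+A}\}$. Together with the resolvent bound \eqref{resolvent estimate symmetric gap} (which decays like $1/|z|$ along rays staying in the strip complement and is uniformly bounded on any smaller double sector), this is precisely the definition of a bisectorial operator; I would cite the bisectoriality definition already referenced in the paper (cf.\ \cite[Definition~2.7]{MR3169033}) for the precise formulation. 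I do not expect a genuine obstacle here — the whole corollary is a specialization followed by elementary simplification; the only mild care needed is verifying that the $b$-term in the maximum is inactive, which is where \eqref{symmetric gap remains} gets used.
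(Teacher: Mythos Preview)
Your proposal is correct and follows exactly the approach the paper intends: the corollary is stated without explicit proof, immediately after the remark that ``the claims in Theorem~\ref{new-non-symmetric gap}~ii) and, in particular, the resolvent estimates in Remark~\ref{res-est-detailed} simplify'' when the gap is symmetric, so it is precisely the specialization $\alpha_T=-\beta_T$ you carry out. One tiny remark: the domination of the $b$-term in the maximum does not actually require \eqref{symmetric gap remains} --- the inequality $\sqrt{a^2+b^2\beta_T^2}\ge b\beta_T \ge b(\beta_T-|\re z|)$ holds unconditionally.
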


Theorem \ref{new-non-symmetric gap} allows us to strengthen another classical result of Kato who showed that 
if $T$ is self-adjoint, $A$ is $T$-bounded with $T$-bound~$<\frac 12$ and constants $a'$, $b'$ as in \eqref{c1}, $\lm$ is an isolated eigenvalue of $T$ with multiplicity $m<\infty$, and
\beq\label{eigenvalue cond.}
a'+b'(|\lm|+d)<\frac{d}{2},\quad d:=\dist(\lm,\sigma(T)\setminus\{\lm\}),
\eeq
then the open ball $B(\lm,d/2)$ contains exactly $m$ eigenvalues of $T+A$, counted with algebraic multiplicity, and no other points of $\sigma(T+A)$ 
(see \cite[Section~V.4.3]{Ka}). 

\medskip

Here we can allow for perturbations with relative bound $<1$ (see Remark \ref{LAST!!!} below) and for non-symmetric spectral gaps around the isolated eigenvalue. 

\begin{theorem}
\label{eigenvalues}
Let $T$ be a self-adjoint operator in a Hilbert space $\H$, let $A$ be T-bounded with $T$-bound $<1$ and with $a,b\geq 0$, $b<1$, as in \eqref{c2}. Suppose that $\lm\in\sigma(T)$ is an isolated eigenvalue of $\,T$ of multiplicity $m<\infty$, and set
\[
 \al:=\max\SSet{\nu\in\sigma(T)}{\nu<\lm},\quad 
 \be:=\min\SSet{\nu\in\sigma(T)}{\nu>\lm}.
\]
If
\begin{equation}
\label{last!!!}
  \sqrt{a^2\!+\!b^2\al^2} \!+\! \sqrt{a^2\!+\!b^2\lm^2} < \lm \!-\! \alpha \quad \mbox{and} \quad 
  \sqrt{a^2\!+\!b^2\lm^2} \!+\! \sqrt{a^2\!+\!b^2\be^2} < \beta \!-\! \lm, 
\end{equation}  
then the vertical strip
\[
 \big( \lm- \sqrt{a^2\!+\!b^2\lm^2}, \lm + \sqrt{a^2\!+\!b^2\lm^2} \big) +\I\R
\]
contains exactly $m$ isolated eigenvalues of $\,T\!+\!A$, counted with algebraic multiplicity. 
\end{theorem}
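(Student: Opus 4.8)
The plan is to reduce the statement to a situation where Theorem~\ref{new-non-symmetric gap}~ii) applies on two adjacent spectral free strips and then to count eigenvalues via a homotopy argument along the family $T+sA$, $s\in[0,1]$. First I would observe that the two conditions in \eqref{last!!!} are precisely the gap condition \eqref{non symmetric gap remains} applied to the spectral gap $(\al,\lm)$ and to the spectral gap $(\lm,\be)$ of $T$, respectively. Hence Theorem~\ref{new-non-symmetric gap}~ii) yields that, for every $s\in[0,1]$,
\[
  \sigma(T+sA)\cap\SSet{z\in\C}{\al+\sqrt{a^2\!+\!b^2\al^2}<\re z<\lm-\sqrt{a^2\!+\!b^2\lm^2}}=\emptyset
\]
and likewise with $(\lm,\be)$ in place of $(\al,\lm)$, i.e.\ the two vertical strips to the left and to the right of the strip
$S:=\big(\lm-\sqrt{a^2+b^2\lm^2},\lm+\sqrt{a^2+b^2\lm^2}\big)+\I\R$ are free of $\sigma(T+sA)$ for all $s$. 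By Theorem~\ref{new-non-symmetric gap}~iii) (applied with the \emph{essential} spectral gaps $(\al,\lm)$ and $(\lm,\be)$, noting $\sigmaess(T)\subset\sigma(T)$ has no points in either), $\sigma(T+sA)\cap S$ consists of at most countably many isolated eigenvalues of finite algebraic multiplicity which can accumulate only at the two boundary lines of $S$; since those boundary lines are at positive distance from the portion of $\sigma(T+sA)\cap S$ that stays inside any slightly smaller substrip, the part of $\sigma(T+sA)$ in $S$ is in fact a finite set of eigenvalues of finite total algebraic multiplicity.

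Next I would set up the counting. Let $P_s$ be the Riesz spectral projection of $T+sA$ associated with $\sigma(T+sA)\cap S$; concretely, choosing a smooth contour $\Gamma$ that encircles $\sigma(T+sA)\cap S$ and lies in the resolvent set (possible since $S$ is bounded in the real direction after intersecting with $\sigma(T+sA)$, and the non-real spectrum is confined between the hyperbolas of Theorem~\ref{new-non-symmetric gap}~i)), one has $P_s=-\frac1{2\pi\I}\oint_\Gamma (T+sA-z)^{-1}\,\rd z$. The crucial point is that a \emph{single} contour $\Gamma$ works for all $s\in[0,1]$ simultaneously: it suffices to take $\Gamma$ to run along the two vertical lines $\re z=\lm\pm\sqrt{a^2+b^2\lm^2}\mp\eta$ for small $\eta>0$, closed off by two horizontal segments at $|\im z|=M$ for $M$ large enough that the hyperbola bound in Theorem~\ref{new-non-symmetric gap}~i) puts all of $\sigma(T+sA)$ with $|\re z|$ in that range below height $M$. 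Using the resolvent estimate in Proposition~\ref{new-resolvent estimates}~i)--ii), one gets $\|(T+sA-z)^{-1}\|$ bounded uniformly in $s$ on $\Gamma$, hence $s\mapsto P_s$ is continuous (indeed norm-analytic in $s$, since $(T+sA-z)^{-1}$ depends analytically on $s$ via the Neumann series $\sum_k(-s)^k\big(A(T-z)^{-1}\big)^k(T-z)^{-1}$, which converges uniformly on $\Gamma$ because $\|A(T-z)^{-1}\|<1$ there by the estimates in the proof of Theorem~\ref{new-non-symmetric gap}). A continuous family of projections has constant rank, so $\operatorname{rank}P_1=\operatorname{rank}P_0$. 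Finally $P_0=E_T(S\cap\R)=E_T(\{\lm\})$ has rank $m$ by hypothesis, so $\operatorname{rank}P_1=m$; since $P_1$ projects onto the span of the generalized eigenspaces of $T+A$ for the eigenvalues in $S$, this says exactly that $S$ contains $m$ eigenvalues of $T+A$ counted with algebraic multiplicity, which is the assertion.

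The main obstacle is the uniform-in-$s$ control needed to justify that one fixed contour $\Gamma$ encloses $\sigma(T+sA)\cap S$ for \emph{every} $s$ and stays in $\rho(T+sA)$ for every $s$: this requires combining the vertical-strip exclusion \eqref{new non symmetric gap} (for the two neighbouring gaps, with bounds independent of $s$) with the hyperbola enclosure \eqref{eq. general sa} (also $s$-independent once $a,b$ are fixed) to box in the relevant part of the spectrum, and then the resolvent bounds of Proposition~\ref{new-resolvent estimates} to get the norm estimates on $\Gamma$ uniform in $s$; all the ingredients are already in place, so this is bookkeeping rather than a new idea. One should also remark, as the paper does in Remark~\ref{Kato-const}, that running the argument with the constants $a'$, $b'$ of \eqref{c1} in place of $a$, $b$ gives the parallel statement with $a'+b'|\lm|$ in the strip width and $a'+b'(|\lm|+d)$-type conditions, recovering and improving Kato's result \eqref{eigenvalue cond.}.
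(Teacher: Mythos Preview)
Your approach is essentially the paper's own proof: apply Theorem~\ref{new-non-symmetric gap}~ii) on the two adjacent gaps $(\al,\lm)$ and $(\lm,\be)$ to obtain spectral-free vertical strips flanking $S$, use part~i) to cap the spectrum at some height $\eta_0$, run a single rectangular contour $\Gamma$ through these regions for all $s\in[0,1]$, and conclude $\operatorname{rank}P_s\equiv m$ by continuity of the Riesz projection.

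There is one slip to correct. Your vertical lines $\re z=\lm\pm\sqrt{a^2+b^2\lm^2}\mp\eta$ lie \emph{inside} $S$, where $\sigma(T+sA)$ may sit; hence $\Gamma$ as written is not guaranteed to lie in $\rho(T+sA)$ and may fail to enclose all eigenvalues in $S$. The vertical segments must be placed in the adjacent free strips, i.e.\ at $\re z=\lm\pm\sqrt{a^2+b^2\lm^2}\pm\eta$ with $\eta>0$ small enough that these still lie in the strips from Theorem~\ref{new-non-symmetric gap}~ii). This is exactly what the paper does (it phrases it as choosing $\Gamma_1,\Gamma_3$ to ``pass through the two strips'' in \eqref{sets2}). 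With that fix your argument is complete.

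The detour through Theorem~\ref{new-non-symmetric gap}~iii) to establish finiteness of $\sigma(T+sA)\cap S$ beforehand is unnecessary, and as stated it does not quite apply: the endpoints $\lm$ of your ``essential spectral gaps'' $(\al,\lm)$ and $(\lm,\be)$ are not in $\sigmaess(T)$, so the hypothesis of~iii) fails. (One could instead use the single essential gap $(\al,\be)$.) But none of this is needed: once the contour is correctly placed, finiteness is an immediate consequence of $\operatorname{rank}P_s=\operatorname{rank}P_0=m<\infty$, which is how the paper proceeds.
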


\begin{proof}
By Theorem \rref{new-non-symmetric gap} i) there exists $\eta_0>0$ such that 
\begin{equation}
\label{luzern}
\SSet{z\in\C}{|\im z|>\eta_0}\subset\rho(T+A).
\end{equation}
Moreover, Theorem \rref{new-non-symmetric gap} ii) implies that 
\begin{equation}
\label{sets2}
\begin{split}
 &\SSet{z\in\C}{\al+\sqrt{a^2\!+\!b^2\al^2}<|\re z|<\lm-\sqrt{a^2\!+\!b^2\lm^2}}\subset\rho(T+A),\\
 &\SSet{z\in\C}{\lm+\sqrt{a^2\!+\!b^2\lm^2}<|\re z|<\be-\sqrt{a^2\!+\!b^2\be^2}}\subset\rho(T+A).
\end{split}
\end{equation}
Hence we can choose a closed rectangular Jordan curve $\Gamma \!=\!\bigcup_{i=1}^4 \Gamma_i \!\subset\! \rho(T\!+\!A)$ 
whose vertical parts $\Gamma_1$, $\Gamma_3$ pass through the two strips in \eqref{sets2} and whose horizontal parts $\Gamma_2$, $\Gamma_4$ 
lie in the two sets $\set{z\in\C}{\im z> \eta_0}$ and $\set{z\in\C}{\im z< -\eta_0}$, respectively. %

Using the estimates \eqref{ref2a}, \eqref{ref2b} in the proof of Theorem \rref{new-non-symmetric gap} in $\Gamma_1$, $\Gamma_3$ and 
the estimate \eqref{ref1} in the proof of Theorem \rref{new-non-symmetric gap} in $\Gamma_2$, $\Gamma_4$, 
one can show that the family of Riesz projections
\[
 P(\chi):=-\frac{1}{2\pi\I}\int_{\Gamma}(T+\chi A-\lm)^{-1} \rd \lm
\]
depends continuously on $\chi\in[0,1]$. Hence we have $\dim \Ran P(1)=\dim \Ran P(0)=\dim E(\{\lm\})=m$. 
This together with \eqref{luzern} yields the claim.
 \end{proof}

\begin{remark}
\label{LAST!!!}
i) Theorem \ref{eigenvalues} allows for $b\!<\!1$, and not only for $b\!<\!\frac 12$ as in \cite[Section~V.4.3]{Ka}; this can be seen e.g.\ letting $\lm\!=\!0$, $a\!=\!0$ in \eqref{last!!!} and \eqref{eigenvalue cond.},~respectively.

ii) Theorem \rref{eigenvalues} can easily be generalized to a finite number of eigenvalues separated from the rest of the spectrum. 
\end{remark}

\section{Infinitely many spectral gaps}
\label{Section Infinitely many spectral gaps}

In this section we apply the stability result for a single spectral gap to study the behaviour of infinitely many spectral gaps.
We establish criteria on the gap lengths ensuring that, under the perturbation, infinitely many spectral gaps are retained or, more strongly, at most a finite number of (finite) spectral gaps closes.

\begin{theorem}
\label{cor. infinitely many gaps nonzero rel. bound}
Let $T$ be a self-adjoint operator in a Hilbert space $\H$ with infinitely many spectral gaps, i.e.\ $\sigma(T) \cap (\alpha_{n},\beta_{n}) \!=\! \emptyset$ with $\alpha_n$, $\beta_n \!\in\! \sigma(T)$, such that $\al_n\!\!<\!\be_n\!\!\leq\!\al_{n+1}$, $n\!\in\!\N$, 
$\alpha_n\!\!\to\!\infty$, $n\!\to\!\infty$, and let $A$ be $T\!$-bounded with $T\!$-bound $\delta_A\!<\!1$. 

\begin{enumerate}
\item[{\rm i)}] If the unperturbed spectral gaps $(\alpha_n,\beta_n)$ satisfy
\begin{equation}
\label{abconst.}
 \limsup_{n\to\infty} \frac{\be_n}{\alpha_n} > \frac {1+\delta_A}{1-\delta_A} \ (\ge \!1),
\end{equation}
then $T+A$ has infinitely many stable spectral free strips; if even
\begin{equation}\label{abconst.-new}
 \liminf_{n\to\infty} \hspace{0.5mm} \frac{\be_n}{\alpha_n} > \frac {1+\delta_A}{1-\delta_A} \ (\ge\!1),
\end{equation}%
then at most finitely many spectral gaps of $\,T\!$ close under perturbation \vspace{1mm} by~$A$. 
\item[\rm ii)] If $a_n$,\,$b_n \!\ge\! 0$, $b_n\!<\!1$, are so that \eqref{c2} holds with $a_n$,\,$b_n$ in place of \vspace{-1mm} $a$,\,$b$~and 
\begin{equation}
\label{c4-newer}
  \liminf_{n\to\infty} \frac{\sqrt{a_n^2 + b_n^2 \alpha_n^2}+\sqrt{a_n^2 + b_n^2 \beta_n^2}}{\beta_n-\alpha_n} < 1,
\end{equation}
then $T+A$ still has infinitely many stable spectral free strips; if even
\begin{equation}
\label{c4-new}
  \limsup_{n\to\infty} \frac{\sqrt{a_n^2 + b_n^2 \alpha_n^2}+\sqrt{a_n^2 + b_n^2 \beta_n^2}}{\beta_n-\alpha_n} < 1,
\end{equation}
then at most finitely many spectral gaps of $\,T\!$ close under perturbation by~$A$. 
\end{enumerate}
\end{theorem}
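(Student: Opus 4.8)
The plan is to reduce Theorem \ref{cor. infinitely many gaps nonzero rel. bound} to the single-gap stability result of Theorem \ref{new-non-symmetric gap} ii) applied to each gap $(\alpha_n,\beta_n)$ separately. For part ii), the natural move is to observe that the gap condition \eqref{non symmetric gap remains}, applied to $(\alpha_n,\beta_n)$ with constants $a_n$, $b_n$, is exactly
\[
 \frac{\sqrt{a_n^2 + b_n^2 \alpha_n^2}+\sqrt{a_n^2 + b_n^2 \beta_n^2}}{\beta_n-\alpha_n} < 1.
\]
So under \eqref{c4-newer} this holds for infinitely many $n$, and for each such $n$ Theorem \ref{new-non-symmetric gap} ii) produces a stable spectral free strip $(\alpha_{n,T+A},\beta_{n,T+A}) + \I\R$ with a non-empty base interval inside $(\alpha_n,\beta_n)$; since there are infinitely many such $n$ and the gaps are disjoint and tend to $\infty$, these strips are distinct, giving infinitely many stable spectral free strips. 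Under the stronger hypothesis \eqref{c4-new}, the ratio is $<1$ for all but finitely many $n$, so all but finitely many gaps give rise to stable spectral free strips; the finitely many remaining gaps are the only ones that may close. I would also remark that one must check the $\alpha_n \to \infty$ hypothesis guarantees $\alpha_{n,T+A} < \beta_{n,T+A}$ remains a genuine interval, which is automatic from \eqref{spectral-shifts} once the gap condition holds.

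For part i), I would derive \eqref{c4-new} (resp. \eqref{c4-newer}) from the ratio condition \eqref{abconst.-new} (resp. \eqref{abconst.}) by choosing the constants $a_n$, $b_n$ appropriately. The idea is that since $\delta_A < 1$, for any $b$ with $\delta_A < b < 1$ there exists $a \ge 0$ such that \eqref{c2} holds; fixing such a pair $(a,b)$ and using it for every $n$ (so $a_n \equiv a$, $b_n \equiv b$), one estimates
\[
 \frac{\sqrt{a^2 + b^2 \alpha_n^2}+\sqrt{a^2 + b^2 \beta_n^2}}{\beta_n-\alpha_n}
 \le \frac{a/\alpha_n + b + (a/\alpha_n + b)\beta_n/\alpha_n}{\beta_n/\alpha_n - 1}
 \cdot \frac{\alpha_n}{\alpha_n}
\]
— more carefully, dividing numerator and denominator by $\alpha_n$ and writing $r_n := \beta_n/\alpha_n \ge 1$, the left side is $\le \frac{\sqrt{a^2/\alpha_n^2 + b^2} + \sqrt{a^2/\alpha_n^2 + b^2 r_n^2}}{r_n - 1}$, which as $\alpha_n \to \infty$ behaves like $\frac{b(1 + r_n)}{r_n - 1}$. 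Since $x \mapsto \frac{b(1+x)}{x-1}$ is decreasing on $(1,\infty)$, the condition $\limsup r_n > \frac{1+\delta_A}{1-\delta_A}$ translates (after first choosing $b$ close enough to $\delta_A$ that $\frac{1+b}{1-b}$ is still below the $\limsup$) into $\liminf \frac{b(1+r_n)}{r_n-1} < 1$ along the relevant subsequence, hence \eqref{c4-newer}; and $\liminf r_n > \frac{1+\delta_A}{1-\delta_A}$ gives the uniform bound \eqref{c4-new}. Then part i) follows from part ii).

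The main obstacle I anticipate is the bookkeeping in the limit passage in part i): the constants $a$, $b$ in \eqref{c2} cannot be taken to achieve the infimal $T$-bound $\delta_A$ itself (the infimum need not be attained), so one must first fix $b \in (\delta_A, 1)$ strictly, verify that $\frac{1+b}{1-b}$ is still strictly below $\limsup_n \beta_n/\alpha_n$ (possible because $t \mapsto \frac{1+t}{1-t}$ is continuous and the hypothesis is a strict inequality at $\delta_A$), and only then run the asymptotic estimate. One also has to handle the $a$-terms $a^2/\alpha_n^2$ carefully to confirm they are genuinely negligible as $\alpha_n \to \infty$ and that the error does not spoil the strict inequality; a clean way is to note that for any $\eps > 0$ one has $\sqrt{a^2 + b^2 t^2} \le \eps + b t$ for $t$ large, absorbing the $a$-contribution into an arbitrarily small additive constant before dividing by $\beta_n - \alpha_n \to \infty$. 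The rest — disjointness of the strips, that "spectral gap closes" means precisely the failure of a stable spectral free strip, and the reduction of i) to ii) — is routine.
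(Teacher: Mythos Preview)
Your proposal is correct and follows essentially the same route as the paper: part ii) is immediate from Theorem \ref{new-non-symmetric gap} ii) applied gap by gap, and part i) is reduced to part ii) by fixing a single pair $(a,b)$ with $b\in(\delta_A,1)$ chosen so that $\tfrac{1+b}{1-b}$ stays below the relevant limit of $\beta_n/\alpha_n$, then dividing through by $\alpha_n$ and using $\alpha_n\to\infty$ to kill the $a$-contribution. The paper carries out the $\eps$-bookkeeping you flag as the main obstacle in explicit form (introducing an auxiliary $\delta_{A,\eps}$ and a threshold $N_0$ beyond which $a^2/\alpha_n^2$ is suitably small), but this is precisely the routine calculation you anticipate, not a different idea.
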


\begin{proof}
Without loss of generality, we may assume that $\alpha_n>0$, $n\in \N$. \vspace{1mm}

i) Suppose that \eqref{abconst.} holds. Then there exists s subsequence $(n_k)_{k\in\N} \subset \N$ such that 
$\frac {\beta_{n_k}}{\alpha_{n_k}} \to  \limsup_{n\to\infty} \frac {\beta_n}{\alpha_n} =: \gamma_{\rm s} \in (\frac {1+\delta_A}{1-\delta_A},\infty]$. 
By assumption \eqref{abconst.} and the definition of the $T$-bound $\delta_A$, we can choose $a\geq 0$ and $b\in\big(\delta_A,\frac{\gamma_{\rm s}-1}{\gamma_{\rm s}+1}\big) \,\subset (\delta_A,1)$ such that \eqref{c2} holds.
\vspace{-3mm}Then
\begin{align*}
  \lim_{k\to\infty} \frac{\sqrt{a^2 \!+\! b^2 \alpha_{n_k}^2}\!+\!\sqrt{a^2 \!+\! b^2 \beta_{n_k}^2}}{\beta_{n_k}-\alpha_{n_k}}
  &=\lim_{k\to\infty} \frac{\sqrt{\frac{a^2}{\alpha_{n_k}^2} \!+\! b^2}\!+\!\sqrt{\frac{a^2}{\alpha_{n_k}^2} \!+\! b^2 \frac{\beta_{n_k}^2}{\alpha_{n_k}^2}}}{\frac{\beta_{n_k}}{\alpha_{n_k}}-1} 
 = b \, \frac{\gamma_{\rm s}\!+\!1}{\gamma_{\rm s}\!-\!1} 
 <1.
\end{align*}
By Theorem \ref{new-non-symmetric gap} ii), there exists  $k_0 \!\in\! \N$ so that $T\!+\!A$  has infinitely many stable spect\-ral free strips $(\alpha_{T\!+\!A,n_k},\beta_{T\!+\!A,n_k}) + \I \R$ with $(\alpha_{T\!+\!A,n_k},\beta_{T\!+\!A,n_k}) \!\subset\! (\alpha_{n_k},\beta_{n_k})$,~$k \!\ge\!k_0$.

Now suppose that \eqref{abconst.-new} holds and choose $\eps>0$  such that 
\[
 \gamma_{\rm i}:=\liminf_{n\to\infty} \frac {\beta_n}{\alpha_n} > \frac {1+\delta_A(1+\eps)}{1-\delta_A(1+\eps)} > \frac {1+\delta_A}{1-\delta_A}
 \ \mbox{ and } \  
 \delta_{A,\eps} := \delta_A \frac{1+\eps}{1+\frac \eps 2} < 1.  
\]
Then there exists $n_0 \!\in\! \N$ such that $\frac {\beta_{n}}{\alpha_{n}} \!>\! \frac {1\!+\delta_A(1\!+\eps)}{1\!-\delta_A(1\!+\eps)}$, $ n\!\ge\! n_0$. 
Let $a$, $b\!\ge\! 0$, $b \!\in\! (\delta_A, \delta_{A,\eps})$, be such that \eqref{c2} holds.
Since $\alpha_n\to\infty$, there exists $N_0 \in \N$, $N_0 \ge n_0$, such that $\frac{a^2}{\alpha_n^2} < b^2 (\frac{\eps^2}4 + \eps)$.
Observing that $\frac{\beta_n}{\alpha_n} > 1$, we obtain that, for\vspace{-3mm} $n\ge N_0$,
\begin{align*}
  \frac{\sqrt{a^2 \!+\! b^2 \alpha_{n}^2}\!+\!\sqrt{a^2 \!+\! b^2 \beta_{n}^2}}{\beta_{n}-\alpha_{n}}
  & = \frac{\sqrt{\frac{a^2}{\alpha_{n}^2} \!+\! b^2}\!+\!\sqrt{\frac{a}{\alpha_{n}^2} \!+\! b^2 \frac{\beta_{n}^2}{\alpha_{n}^2}}}{\frac{\beta_{n}}{\alpha_{n}}-1}
  < b\Big(1\!+\!\frac \eps2 \Big) \frac{\frac{\beta_n}{\alpha_n}+1}{\frac{\beta_n}{\alpha_n}-1} \\
  &< b\Big(1\!+\!\frac\eps 2\Big) \frac{\frac {1+\delta_A(1+\eps)}{1-\delta_A(1+\eps)}+1}{\frac {1+\delta_A(1+\eps)}{1-\delta_A(1+\eps)}-1}
  = b\Big(1\!+\!\frac\eps 2 \Big) \frac 1{\delta_A(1+\eps)} \\
  & = b \frac 1{\delta_{A,\eps}}< 1.
\end{align*}%
By Theorem \ref{new-non-symmetric gap} ii),  $T+A$ has stable spectral free strips $(\alpha_{T+A,n},\beta_{T+A,n})+\I\R$ with $(\alpha_{T+A,n},\beta_{T+A,n}) \subset (\alpha_{n},\beta_{n})$, $n \ge N_0$.

ii) The claims in ii) are immediate from Theorem \ref{new-non-symmetric gap} since we may allow the relative boundedness constants 
$a_n$, $b_n$ to be chosen differently for each spectral gap $(\alpha_n,\beta_n)$.
\end{proof}

Note that in Theorem \ref{cor. infinitely many gaps nonzero rel. bound} two spectral gaps may be separated by a single spectral point if $\alpha_n\!=\!\beta_{n+1}$. 
In particular, Theorem \ref{cor. infinitely many gaps nonzero rel. bound} applies if the operator $T$ has compact~resolvent. 

While in Theorem~\ref{cor. infinitely many gaps nonzero rel. bound} the assumptions in i) are easier to check, the conditions 
in ii) obtained using different relative boundedness constants in each spectral gap are weaker. 
In the next proposition we analyze the implications of the latter on the
growth of the lengths of the spectral gaps; we restrict ourselves to \eqref{c4-new}.

\begin{proposition}
\label{exponential growth}
Under the assumptions of Theorem \rref{cor. infinitely many gaps nonzero rel. bound}, 
let $l_n:=\be_n-\al_n > 0$, $n\in \N$, be the length of the $n$-th spectral gap 
of~$\,T$, and let $\delta_A \in [0,1)$ be the $T$-bound of $A$. Then 
\begin{quote}
\eqref{c4-new} \ requires that \
$\begin{cases} 
(l_n)_{n\in\N} \mbox{ diverges exponentially if } \,\delta_A>0, \\ 
(l_n)_{n\in\N} \mbox{ diverges if } \,\delta_A =0 \mbox{ and $A$ is unbounded}.
\end{cases}$
\end{quote}
\end{proposition}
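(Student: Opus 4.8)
The plan is to fix sequences $(a_n)$, $(b_n)$ that realize \eqref{c4-new} and to translate that condition into a growth constraint on the gap endpoints $\alpha_n$, $\beta_n$. As a preliminary reduction, since $\alpha_n\to\infty$ I would discard the finitely many gaps with $\alpha_n\le 0$ — this affects neither the $\limsup$ in \eqref{c4-new} nor the asserted divergence of $(l_n)$ — so that $\alpha_n>0$ throughout. From \eqref{c4-new} I obtain $q\in(0,1)$ and $N\in\N$ with
\[
 \sqrt{a_n^2+b_n^2\alpha_n^2}+\sqrt{a_n^2+b_n^2\beta_n^2}\le q\,l_n,\qquad n\ge N,
\]
and I record the elementary fact that, because \eqref{c2} holds with $b=b_n$, the definition of the $T$-bound gives $b_n\ge\delta_A$.

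In the case $\delta_A>0$ I would bound the left-hand side below by $b_n\alpha_n+b_n\beta_n\ge\delta_A(\alpha_n+\beta_n)$, which turns the displayed inequality into $\delta_A(\alpha_n+\beta_n)\le q(\beta_n-\alpha_n)$; since $\alpha_n>0$ this forces $q>\delta_A$ and, on rearranging, $\beta_n\ge\rho\,\alpha_n$ with $\rho:=(q+\delta_A)/(q-\delta_A)>1$. Feeding this into $\alpha_{n+1}\ge\beta_n$ gives geometric growth $\alpha_{n+1}\ge\rho\,\alpha_n$ for $n\ge N$, hence $\alpha_n\ge c\,\rho^{\,n}$ for some $c>0$ and $l_n=\beta_n-\alpha_n\ge(\rho-1)\alpha_n\ge(\rho-1)c\,\rho^{\,n}$ — exponential divergence.

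In the case $\delta_A=0$ with $A$ unbounded I would argue by contradiction: if $(l_n)$ does not diverge, pick a subsequence $(n_k)$ with $l_{n_k}\le M<\infty$. For $k$ large enough that $n_k\ge N$, the displayed inequality gives $a_{n_k}\le qM$ and $b_{n_k}\beta_{n_k}\le qM$; since $\beta_{n_k}>\alpha_{n_k}\to\infty$, the second bound forces $b_{n_k}\to 0$. Fixing $x\in\dom(T)$ and letting $k\to\infty$ in $\|Ax\|^2\le a_{n_k}^2\|x\|^2+b_{n_k}^2\|Tx\|^2\le (qM)^2\|x\|^2+b_{n_k}^2\|Tx\|^2$ yields $\|Ax\|\le qM\|x\|$ for all $x\in\dom(T)$, contradicting the unboundedness of $A$; hence $l_n\to\infty$.

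The algebra (extracting $\rho$, and passing from $\limsup<1$ to an eventual inequality) is routine. The one point that needs care is the second case: the hypothesis ``$A$ unbounded'' is exactly what makes it work — for bounded $A$ one may take $b_n\equiv 0$, $a_n\equiv\|A\|$, and then \eqref{c4-new} imposes no divergence of the gap lengths at all — and the argument crucially uses that a bounded subsequence of gap lengths must sit over endpoints $\beta_{n_k}\to\infty$, so that the admissible relative-boundedness constants $b_{n_k}$ are squeezed down to $0$.
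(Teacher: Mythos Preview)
Your proof is correct. For $\delta_A>0$ it coincides with the paper's argument: both bound the numerator of \eqref{c4-new} below by $\delta_A(\alpha_n+\beta_n)$, extract $\beta_n\ge\rho\,\alpha_n$ with some $\rho>1$, feed this into $\alpha_{n+1}\ge\beta_n$ to get geometric growth of $(\alpha_n)$, and then read off exponential growth of $l_n\ge(\rho-1)\alpha_n$.

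For $\delta_A=0$ your route differs slightly from the paper's and is in fact the cleaner one. The paper first asserts that $a_n\to\infty$, arguing that otherwise $\|Ax\|^2\le\limsup_n a_n^2\,\|x\|^2$ and hence $A$ would be bounded; it then concludes $l_n\to\infty$ from $\limsup 2a_n/l_n<1$. But the displayed inequality silently discards the term $b_n^2\|Tx\|^2$, which is legitimate only if $b_n\to 0$ along the relevant subsequence --- and nothing in the hypotheses forces that (one may take $a_n\equiv a$, $b_n\equiv b\in(0,1)$ constant and still satisfy \eqref{c4-new} when the gaps grow fast enough, so the intermediate claim $a_n\to\infty$ can fail). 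Your direct contradiction on a bounded subsequence $l_{n_k}\le M$ sidesteps this: the gap inequality itself yields both $a_{n_k}\le qM$ and $b_{n_k}\beta_{n_k}\le qM$, and the second bound together with $\beta_{n_k}\to\infty$ supplies precisely the missing ingredient $b_{n_k}\to 0$, after which the boundedness of $A$ follows cleanly.
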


\begin{proof}
Without loss of \vspace{1mm} generality, we may assume that $\alpha_n\!>\!0$, $n\!\in\! \N$. 

If $\delta_A>0$, then $b_n \ge \delta_A$, $n\in\N$. Together with $\beta_n \le \alpha_{n+1}$, $n\in\N$, 
we find that \eqref{c4-new} implies
\begin{align*}
 1 > \limsup_{n\to\infty} \frac{\sqrt{a_n^2 \!+\! b_n^2 \alpha_{n}^2}\!+\!\sqrt{a_n^2 \!+\! b_n^2 \beta_{n}^2}}{\beta_{n}-\alpha_{n}} 
   & \ge \limsup_{n\to\infty} b_n \,\frac{\be_n \!+\! \al_n}{\beta_{n}\!-\!\alpha_{n}} \\
   &\ge \delta_A \limsup_{n\to\infty} \Big( 1 + 2 \frac{\alpha_n}{\alpha_{n+1}-\alpha_n}\Big).
\end{align*}
It follows that
\[
 \liminf_{n\to\infty}\frac{\al_{n+1}}{\al_n} \geq  2 \frac{\delta_A}{1-\delta_A}+1 = \frac{1+\delta_A}{1-\delta_A}>1
\]
and thus $(\al_n)_{n\in\N}$ diverges exponentially. Because $l_n=\be_n-\al_n\geq (\gamma_{\rm i}-1)\,\al_n$ and $\gamma_{\rm i}>1$ for almost all $n\in\N$, so does~$(l_n)_{n\in\N}$.

\indent
If $\delta_A=0$, we first show that $a_n\to\infty$, $n\to\infty$. Otherwise,  
\[
\|Ax\|^2\leq \limsup_{n\to\infty} a_n^2 \, \|x\|^2,\quad x\in\dom(T);
\]
since $\dom(T)\subset\H$ is dense, this would imply that $A$ is bounded, a contradiction to the assumption. 
From \eqref{c4-new} it now follows that $(l_n)_{n\in\N}$ diverges since
\[
  1> \limsup_{n\to\infty} \frac{\sqrt{a_n^2 \!+\! b_n^2 \alpha_{n}^2}\!+\!\sqrt{a_n^2 \!+\! b_n^2 \beta_{n}^2}}{\beta_{n}-\alpha_{n}} \ge
 \limsup_{n\to\infty} \frac{2 a_n}{l_n}.
 \qedhere
\]
\end{proof}

The last inequality suggests that in Proposition \ref{exponential growth} ii), it may be sufficient that the divergence of $(l_n)_{n\in\N}$ is of the same order as the divergence of $(a_n)_{n\in\N}$ when $b_n\to 0$, $n\to\infty$, e.g.\ power-like rather than exponential, possibly modulated by logarithms \cite{AEL94,ADE98, BEG03}. This is confirmed by Example \ref{cor. 2 to asymptotics} below which is used in Section \ref{subsec:7.2} for a physical application. 

In fact, in applications often the growth rate of the spectral gaps and spectral bands is known rather than that of their end-points. 
Here the following alternative formulas are useful to check the conditions of Theorem \ref{cor. infinitely many gaps nonzero rel. bound}; again we restrict ourselves to \eqref{abconst.-new} and \eqref{c4-new}.

\vspace{-3mm}

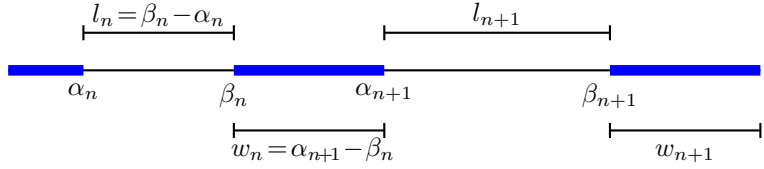
\begin{figure}[h]
\begin{center}
\psset{unit=1cm}
\begin{pspicture}(10,2)
\psline[linestyle=solid](0,1)(10,1)
\psline[linestyle=solid,linewidth=4pt,linecolor=blue](0,1)(1,1) 
\psline[linestyle=solid,linewidth=4pt,linecolor=blue](3,1)(5,1) 
\psline[linestyle=solid,linewidth=4pt,linecolor=blue](8,1)(10,1)
\uput[d](1,1){$\alpha_n$}
\uput[d](3,1){$\beta_n$}
\uput[d](5,1){$\alpha_{n+1}$}
\uput[d](8,1){$\beta_{n+1}$}
\psline[linestyle=solid](1,1.5)(3,1.5)
\psline[linestyle=solid](1,1.4)(1,1.6)
\psline[linestyle=solid](5,1.5)(8,1.5)
\psline[linestyle=solid](3,1.4)(3,1.6)	
\psline[linestyle=solid](5,1.4)(5,1.6)
\psline[linestyle=solid](8,1.4)(8,1.6)
\uput[u](2,1.4){$l_n\!=\!\be_n\!-\!\al_n$}
\uput[u](6.5,1.4){$l_{n+1}$}
\psline[linestyle=solid](3,.2)(5,.2)
\psline[linestyle=solid](8,.2)(10,.2)
\psline[linestyle=solid](3,.1)(3,.3)
\psline[linestyle=solid](5,.1)(5,.3)
\psline[linestyle=solid](8,.1)(8,.3)
\psline[linestyle=solid](10,.1)(10,.3)
\uput[u](4.05,-0.4){$w_n\!=\!\alpha_{n\!+\!1}\!-\!\beta_n$}
\uput[u](9,-0.4){$w_{n+1}$}
\end{pspicture}
\caption{\small Unperturbed spectral gaps and spectral bands of $T$. 
\label{figure spectral bands and gaps}}
\end{center}
\end{figure}

\begin{remark}
Let $l_n:=\be_n-\al_n > 0$, $w_n:=\al_{n+1}-\be_n \ge 0$ be the length of the $n$-th spectral gap and spectral band, respectively, of $\,T$, $n\in\N$. 
Since $\sum_{j=1}^{n-1}(l_j+w_j)=\alpha_n-\alpha_1$, $n\in\N$, condition \eqref{abconst.-new} in Theorem \rref{cor. infinitely many gaps nonzero rel. bound} i) can be written as
\[
 1+\liminf_{n\to\infty}\frac{l_n}{\sum_{j=1}^{n-1}(l_j+w_j)} 
 > \frac {1+\delta_A}{1-\delta_A} \ (\ge\!1),
\]
and a sufficient condition for \eqref{c4-new} in Theorem {\rm \ref{cor. infinitely many gaps nonzero rel. bound} ii)} is 
\beq
\label{eq. asymptotic gap condition with Ln, Bn}
 \kappa_{\rm s}:=\limsup_{n\to\infty}\bigg(b_n+\frac{2}{l_n}\bigg(a_n+b_n\sum_{j=1}^{n-1}(l_j+w_j)\bigg)\bigg)<1. 
\eeq  
\end{remark}

Below, for sequences $(x_n)_{n\in\N}$, $(y_n)_{n\in\N} \subset \R$, we use the notation $x_n\lesssim y_n$ if there exists a constant $C>0$ and $N\in\N$ such that $|x_n|\leq C |y_n|$, $n\geq N$; analogously, we define $x_n\gtrsim y_n$ and we write $x_n\thickapprox y_n$ if $x_n\lesssim y_n$ and $x_n\gtrsim y_n$. 

\begin{example}
\label{cor. 2 to asymptotics}
Assume that the assumptions of Theorem \ref{cor. infinitely many gaps nonzero rel. bound} hold  and that $A$ has $T$-bound $0$. If 
there exist $p_1,q_1>0$, $p_2,q_2\in\R$ such that
\begin{align}
\label{as1}
l_n&\thickapprox n^{p_1}(\log n)^{p_2},\quad w_n\lesssim n^{q_1}(\log n)^{q_2}, \\
\label{as2}
 a_n&\lesssim n^{p_1}(\log n)^{p_2},\quad b_n\lesssim\min\left\{n^{-1},n^{p_1-q_1-1}(\log n)^{p_2-q_2}\right\},
\end{align}
then there exists $\eps_0\!>\!0$ such that  $T\!+\!\eps A$, $0\!\le\! \eps\!\le\!\eps_0$, has infinitely many stable spectral free strips, more precisely,
at most finitely many $($finite$)$ spectral gaps of $\,T$ do not give rise to stable spectral free strips of $\,T+\eps A$. 
\end{example}
\begin{proof}
By assumption \eqref{as1} and some elementary estimates, we \vspace{-1.5mm} have
\begin{align}
\label{eq. sum lj and wj}
&\sum_{j=1}^{n-1}l_j\lesssim n^{p_1+1}(\log n)^{p_2}, \quad
\sum_{j=1}^{n-1}w_j\lesssim n^{q_1+1}(\log n)^{q_2}.
\end{align}
By \eqref{eq. sum lj and wj} and assumption \eqref{as2}, we see that
$\kappa_{\rm s}$ defined in \eqref{eq. asymptotic gap condition with Ln, Bn} satisfies
\beq
\label{eq. kappa proof infinite gaps}
  \kappa_{\rm s} \lesssim 2\, \limsup_{n\to\infty}\left\{\frac{a_n}{n^{p_1}(\log n)^{p_2}}+b_n\left(\frac 12+ n+n^{q_1-p_1+1}(\log n)^{q_2-p_2}\right)\right\}<\infty.
\eeq
Hence $\eps_0>0$ can be chosen so small that the corresponding bound $\eps\gamma$ for the operator $\eps A$ is $<1$ and thus
\eqref{c4-new} holds for all $0\le \eps\le\eps_0$. Now Theorem {\rm \ref{cor. infinitely many gaps nonzero rel. bound}} ii) yields the claims. 
\end{proof}


\section{Symmetric and structured perturbations}
\label{Section Structured perturbations}

If the perturbation has some additional properties, we are able to tighten the stability results for spectral gaps derived in the previous sections. Here we distinguish two cases, first, we briefly consider symmetric perturbations $A$ and, secondly, we consider non-symmetric perturbations that exhibit a certain structure with respect to the spectral gap $(\alpha_T,\beta_T)$ of $T$. In the latter case, we may even allow for perturbations with $T$-bound $\ge 1$.

In the sequel, we denote the numerical range of $A$ by
\[
  W(A):=\left\{(Ax,x): x\in \dom(A),\|x\|=1\right\}.
\]

\begin{theorem}
\label{symmetric perturbation}
Let $T$ be a self-adjoint operator in a Hilbert space $\H$ and let $(\alpha_T,\beta_T)\subset \R$ be
such that $(\alpha_T,\beta_T)$ contains $m$ eigenvalues of $\,T$, counted with multiplicity. 
Let $A$ be a T-bounded symmetric operator with $T$-bound $<1$ such that \eqref{c2} holds with $a,b\geq 0$, $b<1$, and 
\begin{itemize}
\item[{\rm i)}] if  \,\eqref{non symmetric gap remains} holds, i.e.\
$\sqrt{a^2+b^2\alpha_T^2} + \sqrt{a^2+b^2\beta_T^2} < \beta_T - \alpha_T$, 
\vspace{-1mm} set 
\[
(\alpha_{T+A},\beta_{T+A}) := \Big( \alpha_T + \sqrt{a^2+b^2\alpha_T^2}, \beta_T - \sqrt{a^2+b^2\beta_T^2} \Big); 
\]
\item[{\rm ii)}] if $A$ is bounded above and $\sup W(A)+ \sqrt{a^2+b^2\beta_T^2} < \beta_T - \alpha_T$, \vspace{-1mm} set 
\[
(\alpha_{T+A},\beta_{T+A}):= \Big(\alpha_T+\sup W(A),\beta_T - \sqrt{a^2+b^2\beta_T^2}\Big);
\]
\item[{\rm iii)}] if $A$ is bounded below and  $\sqrt{a^2+b^2\alpha_T^2}-\inf W(A)<\beta_T-\alpha_T$, 
\vspace{-1mm} set
\[
(\alpha_{T+A},\beta_{T+A}):= \Big(\alpha_T+\sqrt{a^2+b^2\alpha_T^2},\beta_T+\inf W(A)\Big);
\]
\item[{\rm iv)}] if $A$ is bounded and $\sup W(A)-\inf W(A)<\beta_T-\alpha_T$, 
set
\[
(\alpha_{T+A},\beta_{T+A}):= \big(\alpha_T+\sup W(A),\beta_T+\inf W(A)\big).
\]
\end{itemize}
Then, in each case, $(\alpha_{T+A},\beta_{T+A})$ contains at most $m$ isolated eigenvalues of $\,T+A$, counted with algebraic multiplicity. In particular, if $(\alpha_T,\beta_T)$ contains no eigenvalues of $\,T$, then $(\alpha_{T+A},\beta_{T+A})$ contains no eigenvalues of $\,T+A$. 
\end{theorem}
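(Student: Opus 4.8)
The plan is to reduce each of the four cases to counting, with multiplicity, the zeros of an $m\times m$ Hermitian matrix. Since $A$ is symmetric with $T$-bound $<1$, $T+A$ is self-adjoint by the Kato--Rellich theorem \cite[Theorem V.4.3]{Ka}, so all of its spectrum is real and it suffices to prove $\dim\Ran E_{T+A}\big((\alpha_{T+A},\beta_{T+A})\big)\le m$; once this finiteness is established, the spectrum of $T+A$ in the interval automatically consists of finitely many isolated eigenvalues of finite algebraic (equal to geometric) multiplicity, with total multiplicity $\le m$ and no essential spectrum, which is the assertion (the ``in particular'' being the case $m=0$). I would decompose $\H=\H_1\oplus\H_2$ with $\H_1:=\Ran E_T\big((\alpha_T,\beta_T)\big)$, so $\dim\H_1=m$, $T=T_1\oplus T_2$ with $\sigma(T_1)\subset(\alpha_T,\beta_T)$ and $\sigma(T_2)\cap(\alpha_T,\beta_T)=\emptyset$, and correspondingly
\[
 T+A=\begin{pmatrix}T_1+A_{11}&A_{12}\\ A_{12}^{*}&T_2+A_{22}\end{pmatrix}
\]
on $\H_1\oplus\H_2$; here $A_{11}$, $A_{22}$ are symmetric, $A_{12}$ and $A_{12}^{*}$ are bounded of finite rank (because $\dim\H_1<\infty$), $A_{22}$ satisfies \eqref{c2} relative to $T_2$ as in \eqref{AijTjbounded}, and $T_2+A_{22}$ is again self-adjoint.

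The crucial structural input is that $T_2+A_{22}$ has no spectrum in $(\alpha_{T+A},\beta_{T+A})$. For case i) this follows from Theorem \ref{new-non-symmetric gap} ii) (together with Corollary \ref{cor. semi-bounded} if the relevant gap of $T_2$ is semi-infinite) applied to $T_2$ and $A_{22}$: the map $(\alpha,\beta)\mapsto(\beta-\alpha)-\sqrt{a^2+b^2\alpha^2}-\sqrt{a^2+b^2\beta^2}$ is decreasing in $\alpha$ and increasing in $\beta$, so \eqref{non symmetric gap remains} is inherited by the (possibly wider) spectral gap of $T_2$, and since $t\mapsto t\pm\sqrt{a^2+b^2t^2}$ is increasing the resulting spectrum-free strip of $T_2+A_{22}$ contains $(\alpha_{T+A},\beta_{T+A})$; for cases ii)--iv) one argues analogously using the one-sided bounds $\inf W(A)\le A_{22}\le\sup W(A)$ (valid in form sense whenever the corresponding side of $A$ is bounded). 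Consequently, for every real $\mu\in(\alpha_{T+A},\beta_{T+A})$ the Schur complement
\[
 M(\mu):=(T_1+A_{11}-\mu)-A_{12}\,(T_2+A_{22}-\mu)^{-1}A_{12}^{*}
\]
is a well-defined Hermitian operator on the $m$-dimensional space $\H_1$; by the Frobenius--Schur factorization $\mu\in\sigma(T+A)$ exactly when $\det M(\mu)=0$, and then the algebraic multiplicity of $\mu$ equals $\dim\Ker M(\mu)$. Differentiating, $M'(\mu)=-I-A_{12}(T_2+A_{22}-\mu)^{-2}A_{12}^{*}\le -I$ because $(T_2+A_{22}-\mu)^{-2}\ge 0$; hence, by Weyl's monotonicity theorem, each of the $m$ (ordered) eigenvalue branches $\mu\mapsto\lambda_j(M(\mu))$ is strictly decreasing, so vanishes for at most one $\mu$. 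This gives $\sum_{\mu\in(\alpha_{T+A},\beta_{T+A})}\dim\Ker M(\mu)=\sum_{j=1}^{m}\#\{\mu:\lambda_j(M(\mu))=0\}\le m$, the desired bound; the four cases differ only in which interval (equivalently, which half of the gap condition) enters, the rest of the argument being identical.

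The main obstacle is the apparently ``soft'' step of establishing that the block $T_2+A_{22}$ really is spectrum-free in the relevant open interval: for case i) this is a direct citation of Theorem \ref{new-non-symmetric gap} ii), but in the cases ii)--iv) it needs the corresponding one-sided enclosures, which I would derive from the form inequalities $\inf W(A)\le A_{22}\le\sup W(A)$ and $-\sqrt{a^2+b^2T_2^2}\le A_{22}\le\sqrt{a^2+b^2T_2^2}$ (the latter via operator monotonicity of the square root applied to $A_{22}^{*}A_{22}\le a^2+b^2T_2^2$, using that $A_{22}$ is symmetric) combined with min--max comparison of the spectral counting functions of $T_2+A_{22}$ with those of $T_2$; a subtle edge case is an infinite-dimensional spectral subspace of $T_2$ below $\alpha_T$, which I would sidestep by first enlarging $(\alpha_T,\beta_T)$, without changing $m$, to one whose endpoints lie in $\sigma(T)\cup\{\pm\infty\}$. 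The remaining items — the Frobenius--Schur multiplicity identity $\dim\Ker(T+A-\mu)=\dim\Ker M(\mu)$, the Hermiticity and real-analyticity of $M$, and the boundedness and finite rank of the off-diagonal blocks — are routine. A form-comparison argument in the spirit of \cite{Veselic}, sandwiching $T+A$ between two increasing functions of $T$ and using monotonicity of eigenvalue counting functions, provides an alternative route to the same conclusion.
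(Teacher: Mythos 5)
Your argument is correct, but it takes a genuinely different route from the paper. The paper splits $\H$ into \emph{three} spectral subspaces $\H_1=E_T((-\infty,\alpha_T])\H$, $\H_2=E_T([\beta_T,\infty))\H$, $\H_3=E_T((\alpha_T,\beta_T))\H$ with $\dim\H_3=m$, bounds the quadratic form of $T+A$ from above on $P_1\dom(T)$ and from below on $P_2\dom(T)$ (via Corollary \ref{cor. semi-bounded} applied to the semi-bounded parts $T_{ii}+A_{ii}$, resp.\ via $\sup W(A)$, $\inf W(A)$ in cases ii)--iv)), and then invokes the variational two-subspace lemma \cite[Satz~8.28]{Weid}; no resolvents or Schur complements appear. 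You instead split off the $m$-dimensional middle space, prove that the complementary block $T_2+A_{22}$ is spectrum-free in $(\alpha_{T+A},\beta_{T+A})$, and count zeros of the $m\times m$ Hermitian Schur complement $M(\mu)$ using $M'(\mu)\le -I$ and Weyl monotonicity. Your route is sound -- in particular the off-diagonal blocks are indeed bounded and finite rank since $A_{12}\subset A_{21}^{*}$ with $A_{21}$ defined on the finite-dimensional $\H_1$, and the strict decrease of the ordered eigenvalue branches does give at most one zero per branch -- and it buys something the paper's proof does not: the quantitative bound $\lambda_j(M(\mu_2))\le\lambda_j(M(\mu_1))-(\mu_2-\mu_1)$, which could be used to localize the perturbed eigenvalues. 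The price is that in cases ii)--iv) your key sub-claim, that $\sigma(T_2+A_{22})$ avoids the target interval, is a \emph{gap} statement rather than a semiboundedness statement: $A_{22}\le\sup W(A)$ alone only shifts the whole spectrum, so you must further split $\H_2=\H_{2-}\oplus\H_{2+}$ at the gap and apply exactly the $m=0$ instance of the two-subspace variational lemma -- i.e.\ the paper's entire argument reappears as your sub-step, and the Schur-complement layer on top is then doing no additional work for the mere counting bound. You acknowledge this step only in outline ("min--max comparison of counting functions"); it should be written out, but it is the standard argument and not a genuine gap.
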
 

\begin{proof}
Denote by $E_T$ the spectral family of the self-adjoint operator $T$,~set
\[
P_1:=E_T((-\infty,\alpha_T]),\quad P_2:=E_T([\beta_T,\infty)),\quad  P_3:=E_T((\alpha_T,\beta_T)),
\]
and $\H_i:=P_i \H$, $i=1,2,3$. Then
$\H=\H_1\oplus \H_2 \oplus \H_3$ with $\dim \H_3=m<\infty$ by the assumption on $T$.
Let $T_{ii}:=P_iT|_{\H_i}$ be the part of $T$ in $\H_i$ and set 
\[
A_{ij}:=P_iAP_j|_{\H_j},\quad \dom(A_{ij}):=P_j\dom(T),\quad i,j=1,2,3.
\]
Then the operators $T_{ii}$, $i=1,2$, are semi-bounded, $T_{11} \le \alpha$ and $T_{22} \ge \beta$.
Since $A$ is symmetric and $T$-bounded, the operators $A_{ii}$ are symmetric and $T_{ii}$-bounded, $i=1,2$; in particular, \eqref{c2} holds for the pairs $T_{ii}$, $A_{ii}$, $i=1,2$ with the same constants $a,b\geq 0$, $b<1$. Hence, by 
Corollary \ref{cor. semi-bounded}, for all $x\in P_1\dom(T)$, $y\in P_2\dom(T)$,
\beq
\label{twospaces}
\begin{split}
((T+A)x,x)&=((T_{11}+A_{11})x,x)\leq \big( \alpha_T + \sqrt{a^2+b^2\alpha_T^2} \big) 
\,\|x\|^2, 
\\ 
((T+A)y,y)&=((T_{22}+A_{22})y,y)\geq \big( \beta_T - \sqrt{a^2+b^2\beta_T^2} \big) \ \|y\|^2. 
\end{split}
\eeq
Now we are precisely in the position of \cite[Satz~8.28]{Weid} from which the claim in case~i) follows. 
In cases ii), iii), and  iv) we replace the first, the second, or both, respectively estimates in 
\eqref{twospaces} by corresponding estimates in terms of the numerical range of $A$, e.g.\ for 
$x\in P_1\dom(T)$,
\[
 ((T_{11}+A_{11})x,x)\leq \big(\alpha_T+\sup W(A)\big)\,\|x\|^2.\qedhere
\]
\end{proof}

\vspace{2mm}

In the previous theorem, the operator matrix representation of $T+A$ with respect to the ``almost spectral gap" $(\alpha_T,\beta_T)$ was used as a tool in the proof. 
In the following results, we assume that the perturbation $A$ is either ``off-diagonal" or ``diagonal" with respect to the spectral gap $(\alpha_T,\beta_T)$.

\begin{theorem}
\label{thm. diag. dom. bom.}
Let $T$ be a self-adjoint operator in a Hilbert space $\H$ with spectral family $E_T$ and spectral gap $(\alpha_T,\beta_T)$ such that $0\in (\al_T,\be_T)$, and
denote by $T_{11}$, $T_{22}$ the restrictions of $\,T$ to $\H_1:= E_T((-\infty,\alpha_T])\H$ and $\H_2:= E_T([\be_T,\infty))\H$, respectively.  
Let $A$ be $T$-bounded and off-diagonal with respect to the decomposition $\H=\H_1\oplus\H_2$, i.e.\ 
\[
  T\!=\!\begin{pmatrix}T_{11}\!&\!0\\0\!&\!T_{22}\end{pmatrix},\ \ A\!=\!\begin{pmatrix}0\!&\!A_{12}\\A_{21}\!&\!0\end{pmatrix}
  \ \ \mbox{in} \ \ %
  \dom (T) = \dom (T_{11}) \oplus \dom (T_{22}) \subset \dom (A),
\]
with $\dom(T_{11})=E_T((-\infty,\alpha_T])\dom(T)$, $\dom(T_{22})=E_T([\be_T,\infty)])\dom(T)$
in $\H=\H_1\oplus\H_2$. If the constants $a_{12}$, $a_{21}$, $b_{12}$, $b_{21}\geq 0$, in 
\begin{alignat*}{2}
 \|A_{21}x\|^2&\leq a_{21}^2\|x\|^2+b_{21}^2\|T_{11}x\|^2,&\quad &x\in \dom(T_{11}), \\ 
 \|A_{12}y\|^2&\leq a_{12}^2\|y\|^2+b_{12}^2\|T_{22}y\|^2,&\quad &y\in \dom(T_{22}), 
\end{alignat*}
\vspace{-1mm} satisfy 
\begin{equation}
\label{dec23}
 b_{12}b_{21} < 1, \quad 
 \sqrt{(a_{12}^2 + b_{12}^2\be_T^2)(a_{21}^2 + b_{21}^2\alpha_T^2)} < \Big( \frac{\be_T-\al_T}2 \Big)^2
\end{equation}
and we set
\[
\delta_{T+A}
:= \frac{\be_T-\al_T}2 - \sqrt{ \Big( \frac{\be_T-\al_T}2 \Big)^2 
- \sqrt{(a_{12}^2 + b_{12}^2\be_T^2) (a_{21}^2 + b_{21}^2\alpha_T^2)} }, 
\]
then
\beqnt
\label{eq. gap diag. dom BOM}
\sigma(T+A)\cap \big\{z\in\C : \alpha_T + \de_{T+A} < \re z < \be_T- \de_{T+A} \big\}
= \emptyset.
\eeqnt
\end{theorem}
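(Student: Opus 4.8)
The plan is to reduce the off-diagonal case to a resolvent bound via the Schur complement, exactly as in the proof of Theorem~\ref{new-non-symmetric gap}. Fix $z$ with $\alpha_T+\delta_{T+A}<\re z<\beta_T-\delta_{T+A}$; since $0\in(\alpha_T,\beta_T)$ we have $z\in\rho(T)=\rho(T_{11})\cap\rho(T_{22})$, and $T-z$ is block diagonal. The idea is to write $T+A-z = (T-z)\bigl(I + (T-z)^{-1}A\bigr)$ and show $\|(T-z)^{-1}A\|<1$, or, more sharply, to exploit the off-diagonal structure: $(T-z)^{-1}A$ is the off-diagonal operator with blocks $(T_{11}-z)^{-1}A_{12}$ and $(T_{22}-z)^{-1}A_{21}$, so its square is block diagonal with blocks $(T_{11}-z)^{-1}A_{12}(T_{22}-z)^{-1}A_{21}$ and $(T_{22}-z)^{-1}A_{21}(T_{11}-z)^{-1}A_{12}$. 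Hence it suffices to prove
\[
 \|(T_{11}-z)^{-1}A_{12}(T_{22}-z)^{-1}A_{21}\|<1 \quad\text{and}\quad \|(T_{22}-z)^{-1}A_{21}(T_{11}-z)^{-1}A_{12}\|<1,
\]
which both follow from $\|A_{12}(T_{22}-z)^{-1}\|\,\|A_{21}(T_{11}-z)^{-1}\|<1$.

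The second step is to estimate each of these two norms using the same computation as in Theorem~\ref{new-non-symmetric gap}~ii). Writing $z=\mu+\I\nu$ with $\mu=\re z$, the bound \eqref{ineq1}--\eqref{ref2a} applied to the self-adjoint operators $T_{11}\le\alpha_T$ and $T_{22}\ge\beta_T$ gives
\[
 \|A_{21}(T_{11}-z)^{-1}\| \le \sup_{t\in\sigma(T_{11})}\frac{\sqrt{a_{21}^2+b_{21}^2 t^2}}{|t-\mu|}, \qquad
 \|A_{12}(T_{22}-z)^{-1}\| \le \sup_{t\in\sigma(T_{22})}\frac{\sqrt{a_{12}^2+b_{12}^2 t^2}}{|t-\mu|}.
\]
Since $\sigma(T_{11})\subset(-\infty,\alpha_T]$ and $\mu>\alpha_T$, an elementary monotonicity/convexity argument (as for \eqref{ref2b}) shows the first supremum equals $\max\{b_{21},\sqrt{a_{21}^2+b_{21}^2\alpha_T^2}/(\mu-\alpha_T)\}$; likewise the second equals $\max\{b_{12},\sqrt{a_{12}^2+b_{12}^2\beta_T^2}/(\beta_T-\mu)\}$. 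Multiplying, one needs
\[
 \max\Bigl\{b_{21},\tfrac{\sqrt{a_{21}^2+b_{21}^2\alpha_T^2}}{\mu-\alpha_T}\Bigr\}\cdot
 \max\Bigl\{b_{12},\tfrac{\sqrt{a_{12}^2+b_{12}^2\beta_T^2}}{\beta_T-\mu}\Bigr\} < 1.
\]

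The third step is the purely algebraic verification that the definition of $\delta_{T+A}$ is precisely what makes this product $<1$ on the stated strip. The product of the two ``fractional'' terms is $\sqrt{(a_{12}^2+b_{12}^2\beta_T^2)(a_{21}^2+b_{21}^2\alpha_T^2)}/\bigl((\mu-\alpha_T)(\beta_T-\mu)\bigr)$, and on the interval $(\alpha_T,\beta_T)$ the denominator $(\mu-\alpha_T)(\beta_T-\mu)$ is minimized at the endpoints and is $\ge(\frac{\beta_T-\alpha_T}{2})^2 - (\frac{\beta_T-\alpha_T}{2}-\delta_{T+A})^2 = \delta_{T+A}(\beta_T-\alpha_T-\delta_{T+A})$ whenever $|\mu-\frac{\alpha_T+\beta_T}{2}|\le\frac{\beta_T-\alpha_T}{2}-\delta_{T+A}$; plugging in the definition of $\delta_{T+A}$ (which solves $\delta(\beta_T-\alpha_T-\delta)=\sqrt{(a_{12}^2+b_{12}^2\beta_T^2)(a_{21}^2+b_{21}^2\alpha_T^2)}$, using \eqref{dec23} for solvability) makes that fractional product $\le1$ with strict inequality in the open strip, while the cross terms are bounded by $b_{12}b_{21}<1$ and by $b_{21}\sqrt{a_{12}^2+b_{12}^2\beta_T^2}/(\beta_T-\mu)$ etc., which one checks are also $<1$ on the same range. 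Then $\|(T-z)^{-1}A\|^2<1$, so $I+(T-z)^{-1}A$ is boundedly invertible by \cite[Theorem~IV.3.17]{Ka}, hence $z\in\rho(T+A)$.

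The main obstacle is the case analysis in the third step: one has to confirm that the \emph{maximum} of the two terms in each factor, when multiplied out, still stays below $1$ uniformly over $\mu$ in the strip — i.e.\ that the three ``mixed'' products ($b_{12}b_{21}$; $b_{21}$ times the $\beta_T$-fraction; $b_{12}$ times the $\alpha_T$-fraction) never exceed the bound coming from $\delta_{T+A}$. This requires comparing $\delta_{T+A}$ against quantities like $a_{12}^2+b_{12}^2\beta_T^2$ and invoking the second inequality in \eqref{dec23}; it is elementary but needs care to handle all sign/size configurations of $\alpha_T,\beta_T$ around $0$. Everything else is a direct transcription of the arguments already carried out for Theorem~\ref{new-non-symmetric gap}.
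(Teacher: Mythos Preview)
Your overall strategy coincides with the paper's: both reduce to showing that the product $\|A_{21}(T_{11}-\lambda)^{-1}\|\cdot\|A_{12}(T_{22}-\lambda)^{-1}\|<1$ on the strip. The paper reaches this via the Frobenius--Schur factorization and the second Schur complement $S_2(\lambda)=T_{22}-\lambda-A_{21}(T_{11}-\lambda)^{-1}A_{12}$, obtaining the equivalence $\lambda\in\rho(T+A)\iff 1\in\rho\bigl(A_{21}(T_{11}-\lambda)^{-1}A_{12}(T_{22}-\lambda)^{-1}\bigr)$; your route via squaring the off-diagonal operator $(T-z)^{-1}A$ is equivalent.

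The genuine gap is in your third step. The assertion that the ``mixed'' products such as $b_{21}\cdot\sqrt{a_{12}^2+b_{12}^2\beta_T^2}/(\beta_T-\mu)$ stay below $1$ on the strip is \emph{not} a consequence of \eqref{dec23}. Take $\alpha_T=-1$, $\beta_T=10$, $a_{12}=a_{21}=0$, $b_{12}=\tfrac{1}{10}$, $b_{21}=9$: then $b_{12}b_{21}=\tfrac{9}{10}<1$, the second inequality in \eqref{dec23} reads $9<(\tfrac{11}{2})^2$, and $\delta_{T+A}\approx 0.89$, so $\mu=9$ lies in the open strip. At $\mu=9$ one has $\max\{b_{21},\sqrt{a_{21}^2+b_{21}^2\alpha_T^2}/(\mu-\alpha_T)\}=\max\{9,\tfrac{9}{10}\}=9$ and $\max\{b_{12},\sqrt{a_{12}^2+b_{12}^2\beta_T^2}/(\beta_T-\mu)\}=\max\{\tfrac1{10},1\}=1$, so the product of your two maxima equals $9$. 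Since the first maximum can equal $b_{21}$ precisely when $\mu\ge 0$ (and then the second maximum is always its fractional term), the product of maxima can vastly exceed $1$ inside the claimed strip, and the Neumann-series argument as you outlined it breaks. The obstacle you flag is not merely ``tedious'': with the $\max$-form bounds it does not close.

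What the paper does differently is to exploit the hypothesis $0\in(\alpha_T,\beta_T)$, which you never invoke. Since then $0\in\rho(T_{11})\cap\rho(T_{22})$, one may factor $A_{21}(T_{11}-\mu)^{-1}=A_{21}T_{11}^{-1}\cdot T_{11}(T_{11}-\mu)^{-1}$, and the paper derives the \emph{max-free} estimates \eqref{jan18a}--\eqref{jan18b}, namely $\|A_{21}(T_{11}-\lambda)^{-1}\|\le\sqrt{a_{21}^2+b_{21}^2\alpha_T^2}/(\re\lambda-\alpha_T)$ and analogously for $A_{12}$. With these, only the single quadratic inequality $(\re\lambda-\alpha_T)(\beta_T-\re\lambda)>\sqrt{(a_{12}^2+b_{12}^2\beta_T^2)(a_{21}^2+b_{21}^2\alpha_T^2)}$ remains, which is exactly what defines $\delta_{T+A}$. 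This is also why the Remark following the theorem says that the $\max$ terms---and with them a four-case split---enter only when one drops the assumption $0\in(\alpha_T,\beta_T)$. Your proposal misses this use of the hypothesis and is therefore stuck with an unmanageable case analysis.
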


\begin{proof}
The proof relies on the Frobenius-Schur factorization  (see e.g.\ \cite[(2.2.11)]{CT}),
\beqnt
T+A-\lm=\begin{pmatrix}I&0\\A_{21}(T_{11}-\lm)^{-1}&I\end{pmatrix}\begin{pmatrix}T_{11}-\lm&0\\0&S_2(\lm)\end{pmatrix}\begin{pmatrix}I&(T_{11}-\lm)^{-1}A_{12}\\0&I\end{pmatrix}
\eeqnt
valid on $\dom(T)= \dom (T_{11}) \oplus \dom (T_{22})$; here $S_2(\lm)$, $\lm\!\in\! \rho(T_{11})$, is the second Schur com\-plement of the operator matrix $T+A$, given by
\[
 S_2(\lm):=T_{22}-\lm-A_{21}(T_{11}-\lm)^{-1}A_{12},\quad \dom(S_2(\lm))=\dom(T_{22}).
\]
It is easy to see that (see e.g.\  \cite[\!Corollary~2.3.5]{CT}), for $\lm\!\in\!(\alpha_T\!,\be_T) \!\subset\! \rho(T_{1\!1}) \!\cap\! \rho(T_{22})$,
\beq
\label{eq. spectrum Schur complement}
\lm\in\rho(T+A)  \,\Longleftrightarrow\, 
0\in\rho(S_{2}(\lm)) \,\Longleftrightarrow\, 
1\in\rho\big(A_{21}(T_{11}-\lm)^{-1}A_{12}(T_{22}-\lm)^{-1}\big).
\eeq
Using \eqref{ineq1}, $T_{11} \le \alpha_T$, $T_{22} \geq \be_T$, and $0\in \rho(T) =\rho(T_{11}) \cap \rho(T_{22})$ by assumption, 
we obtain that, for $\lm \in \C$ with $\al_T<\re\lm<\be_T$, 
\begin{align}
\|A_{21}(T_{11}-\lm)^{-1}\| &\leq \|A_{21}(T_{11}-\re\lm)^{-1}\| \leq
\|A_{21}T_{11}^{-1}\| \,\|T_{11}(T_{11}-\re\lm)^{-1}\| \nonumber \\ 
& \leq \frac{\sqrt{a_{21}^2 \!+\! b_{21}^2 \alpha_T^2}}{|\al_T|} \!\left(1\!+\!\frac{|\re\lm|}{\re\lm \!-\! \al_T}\right)^2
= \frac{\sqrt{a_{21}^2 + b_{21}^2\alpha_T^2}}{\re\lm - \al_T}, 
\label{jan18a}\\[-3mm]
\intertext{and, \vspace{-2mm} analogously,}
\|A_{12}(T_{22}-\lm)^{-1}\| &   \leq \frac{\sqrt{a_{12}^2 + b_{12}^2\be_T^2}}{\be_T-\re\lm}.
 \label{jan18b}
\end{align}
Thus $\|A_{21}(T_{11}-\lm)^{-1}A_{12}(T_{22}-\lm)^{-1}\|<1$, and hence $\lm \in \rho(T+A)$ by \eqref{eq. spectrum Schur complement}, if
\[
 (\re\lm - \al_T) (\be_T-\re\lm) > \sqrt{ (a_{12}^2 + b_{12}^2\be_T^2) (a_{21}^2 + b_{21}^2\alpha_T^2) },
\]
which is equivalent to $\al_T+\de_{T+A} < \re\lm < \be_T - \de_{T+A}$. 
\end{proof}

\begin{remark}
If we do not assume that $0\in(\al_T,\be_T)$, the formulation of Theorem~\ref{thm. diag. dom. bom.} has to be modified.
Then four cases have to be distinguished since, instead of the estimates \eqref{jan18a}, \eqref{jan18b}, we now have to 
\vspace{-2mm}use
\begin{align*}
\|A_{21}(T_{11}\!-\!\lm)^{-1}\|&\leq 
\|A_{21}(T_{11}\!-\!\re\lm)^{-1}\|\leq
\max\left\{ b_{21}, \frac{\sqrt{a_{21}^2+b_{21}^2\alpha_T^2}}{\re \lm - \alpha_T} \right\}\!, \ \, \re\lm \!>\! \al_T,\\
\|A_{12}(T_{22}\!-\!\lm)^{-1}\|&\leq 
\|A_{12}(T_{22}\!-\!\re\lm)^{-1}\|\leq 
\max\left\{ b_{12}, \frac{\sqrt{a_{12}^2+b_{12}^2\be_T^2}}{\be_T - \re \lm} \right\}\!, \ \ \re\lm\!<\!\be_T,
\end{align*}
and the maximum may be either of the two expressions depending on the position of $\lm$.  
In all cases, \eqref{dec23} is a necessary condition. To obtain a sufficient condition, the second assumption in \eqref{dec23} has to be modified for the four different cases; we omit the tedious details. 
\end{remark}

\begin{remark}
Theorem \ref{thm. diag. dom. bom.} extends \cite[Theorem~5.4]{MR2865428} where $A$ was assumed to be bounded; in this case, $b_{12}=b_{21}=0$ so that the formula for $\delta_{T+A}$ simplifies and coincides with the corresponding formula therein due to \cite[(5.9)]{MR2865428}.
\end{remark}

\smallskip

In the next theorem the unperturbed operator $T$ is bounded below and diagonal with respect to some decomposition of the Hilbert space $\H$, while the perturbation $A$ is off-diagonal; in the second theorem, $T$ has a spectral gap symmetric to $0$ and is off-diagonal, while $A$ is diagonal.
We mention that such operators $T$ are also called \emph{abstract Dirac operators} \cite[Section~5.1]{Th}; in the first case $T$ is called \emph{even} or \emph{bosonic}, in the second case \emph{odd} or \emph{fermionic}.

\begin{theorem} 
\label{posbound}
Let $T$ be a non-negative self-adjoint operator in a Hilbert space $\H$ 
and let $A$ be $T$-bounded. Suppose that $\tau$ is a self-adjoint involution in $\H$ 
with $T\tau=\tau T$ and $A\tau=-\tau A$, i.e.
\[
T=\begin{pmatrix}T_{11}&0\\0&T_{22}\end{pmatrix}, \quad
A=\begin{pmatrix}0&A_{12}\\ A_{21}&0\end{pmatrix}  \quad \mbox{in }\ \H=\H_+\oplus\H_-,
\] 
where $H_\pm\!:=\!\Ran P_\pm$, $P_{\pm}\!:=\!\frac 12 (I\pm\tau)$. 
Let $\be_{T\!,i} \!:=\! \min \sigma(T_{ii})$, $i=1,2$, and 
$a_{12}$, $a_{21}$, $b_{12}$, $b_{21}\geq 0$ with
\begin{equation}
\label{non-symm-rel-bdd-1}
\begin{array}{rll}
 \|A_{21}x\|^2\!\!\!\!\!&\leq a_{21}^2\|x\|^2+b_{21}^2\|T_{11}x\|^2,\quad &x\in \dom(T_{11})=P_+\dom(T),\\[1mm]
 \|A_{12}y\|^2\!\!\!\!\!&\leq a_{12}^2\|y\|^2+b_{12}^2\|T_{22}y\|^2,\quad &y\in \dom(T_{22})=P_-\dom(T).
\end{array}
\end{equation}
If $\,b_{12}b_{21}<1$, \vspace{-1.5mm} then
\[
 \re\,\sigma(T+A)\geq  \min\{\be_{T\!,1},\be_{T\!,2}\}- \de_{T\!+\!A}^\oplus
\vspace{-1.5mm} 
\]
\vspace{-2.5mm}where 
\[
 \de^{\oplus}_{T\!+\!A} \!\!:=\!\! \sqrt{\!\sqrt{\!a_{1\!2}^2\!\!+\!b_{1\!2}^2 \be_{T\!,2}^2}\sqrt{\!a_{21}^2\!\!+\!b_{21}^2 \be_{T\!,1}^2}}
 \tan\!\!\left(\!\!\frac{1}{2}\!\arctan\!\frac{2\sqrt{\!\sqrt{a_{12}^2\!\!+\!b_{12}^2
 \be_{T\!,2}^2}\sqrt{a_{21}^2\!\!+\!b_{21}^2 \be_{T\!,1}^2}}}{\!\max\{\be_{T\!,1},\be_{T\!,2}\}\!-\!\min\{\be_{T\!,1},\be_{T\!,2}\}\!}\!\right)\!\!.
\]
\end{theorem}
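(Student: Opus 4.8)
The plan is to reduce the problem, exactly as in the proof of Theorem \ref{thm. diag. dom. bom.}, to an estimate on the second Schur complement. Since $T$ is non-negative and $A$ is off-diagonal, for $\lm \in \C$ with $\re\lm < \min\{\be_{T,1},\be_{T,2}\}$ we have $\lm \in \rho(T_{11}) \cap \rho(T_{22})$, and the Frobenius--Schur factorization gives $\lm \in \rho(T+A)$ if and only if $1 \in \rho\big(A_{21}(T_{11}-\lm)^{-1}A_{12}(T_{22}-\lm)^{-1}\big)$. So it suffices to show that the operator norm of $A_{21}(T_{11}-\lm)^{-1}A_{12}(T_{22}-\lm)^{-1}$ is strictly less than $1$ whenever $\re\lm < \min\{\be_{T,1},\be_{T,2}\} - \de^{\oplus}_{T+A}$.

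The key estimates are the analogues of \eqref{jan18a}, \eqref{jan18b}, but now the unperturbed operator is non-negative rather than equipped with a two-sided gap, and $\lm$ lies \emph{below} both spectra. First I would bound, for $\mu := \re\lm < \be_{T,1}$,
\[
 \|A_{21}(T_{11}-\lm)^{-1}\| \le \|A_{21}(T_{11}-\mu)^{-1}\| \le \big\| \sqrt{a_{21}^2 + b_{21}^2 T_{11}^2}\,(T_{11}-\mu)^{-1} \big\| = \sup_{t \ge \be_{T,1}} \frac{\sqrt{a_{21}^2 + b_{21}^2 t^2}}{t-\mu},
\]
using $\|A_{21}x\| \le \|\sqrt{a_{21}^2 + b_{21}^2 T_{11}^2}\,x\|$ exactly as in the passage leading to \eqref{ref2a}. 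An elementary analysis of $t \mapsto \sqrt{a_{21}^2+b_{21}^2 t^2}/(t-\mu)$ on $[\be_{T,1},\infty)$ shows that for $\mu \le 0$ (or more generally for $\mu$ below a suitable threshold) the supremum is attained at the left endpoint $t = \be_{T,1}$, giving $\|A_{21}(T_{11}-\lm)^{-1}\| \le \sqrt{a_{21}^2 + b_{21}^2 \be_{T,1}^2}/(\be_{T,1}-\mu)$; analogously $\|A_{12}(T_{22}-\lm)^{-1}\| \le \sqrt{a_{12}^2 + b_{12}^2 \be_{T,2}^2}/(\be_{T,2}-\mu)$. Writing $c_1 := \sqrt{a_{21}^2 + b_{21}^2 \be_{T,1}^2}$, $c_2 := \sqrt{a_{12}^2 + b_{12}^2 \be_{T,2}^2}$, the product estimate then reads $\|A_{21}(T_{11}-\lm)^{-1}A_{12}(T_{22}-\lm)^{-1}\| \le c_1 c_2 / \big((\be_{T,1}-\mu)(\be_{T,2}-\mu)\big)$, which is $<1$ precisely when $(\be_{T,1}-\mu)(\be_{T,2}-\mu) > c_1 c_2$. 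Solving this quadratic in $\mu$ and identifying the larger admissible $\mu$ with $\min\{\be_{T,1},\be_{T,2}\} - \de^{\oplus}_{T+A}$ is then a computation: one checks that the root of $(\be_{T,1}-\mu)(\be_{T,2}-\mu) = c_1 c_2$ closest to $\min\{\be_{T,1},\be_{T,2}\}$ can be written, after completing the square and using the half-angle identity $\tan(\tfrac12 \arctan x) = (\sqrt{1+x^2}-1)/x$, in the stated tangent form with argument $\tfrac12 \arctan\big(2\sqrt{c_1 c_2}/(\max-\min)\big)$. The condition $b_{12}b_{21} < 1$ guarantees, as in Theorem \ref{thm. diag. dom. bom.}, that the operators $(T_{ii}-\lm)^{-1}$ compose with the $A_{ij}$ to a genuine bounded operator and that the Schur complement is well defined and closed on $\dom(T_{22})$.

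The main obstacle is the endpoint analysis of the supremum $\sup_{t\ge\be_{T,i}} \sqrt{a^2+b^2t^2}/(t-\mu)$: one must verify that for the relevant range of $\mu$ (all $\mu$ strictly below $\min\{\be_{T,1},\be_{T,2}\} - \de^{\oplus}_{T+A}$, in particular including $\mu$ up to and slightly below that threshold) the maximum is indeed attained at $t = \be_{T,i}$ rather than in the interior or at $t \to \infty$ (where the quotient tends to $b$). Since the $\be_{T,i}$ need not be comparable to the $a$'s and $b$'s, this requires care; however, because $T \ge 0$ forces $\mu < \be_{T,i}$ and the interior critical point of the quotient lies at $t = -a^2/(b^2\mu)$, which is negative (hence outside $[\be_{T,i},\infty)$) whenever $\mu < 0$, the endpoint is automatically the maximizer in the physically interesting regime $0 \in (\al_T,\be_T)$, i.e. $\mu$ near or below $0$; a short monotonicity argument covers the remaining values of $\mu$ up to the threshold. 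The rest is the algebra identifying $\de^{\oplus}_{T+A}$, together with a limiting argument that $\re\,\sigma(T+A) \ge \min\{\be_{T,1},\be_{T,2}\} - \de^{\oplus}_{T+A}$ (taking the closure of the open half-plane just shown to lie in $\rho(T+A)$), and the observation that $\sigma(T+A)$ is nonempty so the infimum of its real part is well defined.
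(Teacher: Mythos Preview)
Your plan is the paper's: Frobenius--Schur factorization, product estimate on $\|A_{21}(T_{11}-\lm)^{-1}\|\,\|A_{12}(T_{22}-\lm)^{-1}\|$, solve the quadratic $(\be_{T,1}-\mu)(\be_{T,2}-\mu)=c_1c_2$, then rewrite the smaller root via the half-angle identity. Two points in your endpoint discussion need fixing.

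First, a sign slip: for $\mu<0$ the critical point $t_*=-a^2/(b^2\mu)$ is \emph{positive}, not negative. What actually happens is that $t_*$ is a \emph{minimum} of $t\mapsto\sqrt{a^2+b^2t^2}/(t-\mu)$ on $(\mu,\infty)$, so on $[\be_{T,i},\infty)$ the supremum is $\max\bigl\{b_{ij},\,c_i/(\be_{T,i}-\mu)\bigr\}$ (compare \eqref{ref2b}); the endpoint value dominates $b_{ij}$ exactly when $\mu\ge \be_{T,i}-c_i/b_{ij}$, which is always $\le 0$ since $c_i\ge b_{ij}\be_{T,i}$. So for $\mu\ge 0$ the endpoint wins automatically, but for $\mu<0$ one or both factors may equal $b_{ij}$.

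Second, this is exactly where $b_{12}b_{21}<1$ enters, not where you put it. The Schur complement is well-defined as soon as $\lm\in\rho(T_{11})$; the hypothesis $b_{12}b_{21}<1$ is what makes the product bound $<1$ in the regime where both suprema equal $b_{ij}$ (very negative $\mu$), and then monotonicity in $\mu$ carries the inequality up to the threshold. The paper organises this slightly differently: it first assumes $0\in\rho(T)$ (so $\be_{T,i}>0$) and factors through $T_{ii}^{-1}$ as in \eqref{jan18a} to obtain \eqref{jan21}, then disposes of the case $0\notin\rho(T)$ by replacing $T$ with $T_\eps=T+\eps I$, applying the result to $T_\eps$, and letting $\eps\to 0$. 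You should either incorporate that limiting step or make the monotonicity argument for the $\max$-product explicit.
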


\begin{proof}
If $0\in\rho(T)$, then the proof is similar to the proof of Theorem \ref{thm. diag. dom. bom.}. 
Now, instead of \eqref{jan18a}, \eqref{jan18b}, 
we have to use the two inequalities
\begin{align}
\hspace{-3mm} \|A_{21}(T_{11}\!-\!\lm)^{-1}\| \leq \frac{\sqrt{a_{21}^2 + b_{21}^2 \be_{T\!,1}^2}}{\be_{T\!,1} -\re\lm}, 
\quad 
\|A_{12}(T_{22}\!-\!\lm)^{-1}\| \leq \frac{\sqrt{a_{12}^2 + b_{12}^2 \be_{T\!,2}^2}}{\be_{T\!,2}-\re\lm}
\label{jan21}
\end{align}
for $\lm < \min\{\be_{T\!,1},\be_{T\!,2}\}$. Hence $\lm \in \rho(T+A)$ if $\|A_{21}(T_{11}-\lm)^{-1}A_{12}(T_{22}-\lm)^{-1}\|<1$ which holds if 
\begin{equation}
\label{jan21a}
 \lm < \frac{\be_{T\!,1}+\be_{T\!,2}}2 - \sqrt{\Big(\frac{\be_{T\!,1}-\be_{T\!,2}}2\Big)^2 + \sqrt{a_{12}^2\!\!+\!b_{12}^2 \be_{T\!,2}^2}\sqrt{a_{21}^2\!\!+\!b_{21}^2 \be_{T\!,1}^2}};
\end{equation}
elementary identities for solutions of quadratic equations (comp.\ \cite[Lemma 1.1]{KMM07}) 
show that the right hand side above is equal to $\min\{\be_{T\!,1},\be_{T\!,2}\} - \de_{T\!+A\!}^\oplus$.

If $0\notin\rho(T)$, we consider $T_{\eps}:=T+\eps \,I$, $\eps>0$, instead of $T$. By what we just proved, ${\rm Re}\,\sigma(T_{\eps}+A)\geq  \min\{\be_{T\!,1},\be_{T\!,2}\}+\eps- \de^\oplus_{T\!+\!A,\eps}$ where $\de^\oplus_{T\!+\!A,\eps}$ is obtained from $\de^\oplus_{T\!+\!A}$ by replacing $\beta_{T\!,i}$ by $\beta_{T\!,i}+\eps$ and $a_{ij}$ by $a_{ij}+\eps$. Since $\de_{T\!+\!A,\eps}^\oplus\to \de_{T\!+\!A}^\oplus$, $\eps\to 0$, the claim follows by a limiting argument.
\end{proof}

\begin{remark}
Theorem \ref{posbound} generalizes, and gives a different proof of, \cite[Lemma~1.1]{KMM07} by V.\ Kostrykin, K.A.\ Makarov, and A.K.\ Motovilov for bounded $T$ and $A$, as well as of \cite[Theorem 5.6]{CT09} for unbounded $T$ and bounded $A$.
\end{remark}

\begin{theorem} 
\label{gap thm. off-diag. dom.}
Let $T$ be a self-adjoint operator in a Hilbert space $\H$ with symmetric spectral gap $(-\beta_T,\beta_T)\subset \R$ with $\beta_T>0$, i.e.\ $\sigma(T) \cap (-\beta_T,\beta_T) = \emptyset$ and $\beta_T\in\sigma(T)$, 
and let $A$ be $T$-bounded. Suppose that $\tau$ is a self-adjoint involution in~$\H$ 
with $T\tau=-\tau T$ and $A\tau=\tau A$, i.e.
\[
T=\begin{pmatrix}0&T_{12}\\T_{12}^*&0\end{pmatrix}, \quad
A=\begin{pmatrix}A_{11}&0\\0&A_{22}\end{pmatrix}  \quad \mbox{in } \ \H=\H_+\oplus\H_-,
\] 
where $H_\pm\!:=\!\Ran P_\pm$, $P_{\pm}\!:=\!\frac 12 (I\pm\tau)$. 
Let $a_{11}$, $a_{22}$, $b_{11}$, $b_{22}\geq 0$ be such that
\begin{equation}
\label{non-symm-rel-bdd}
\begin{array}{rll}
 \|A_{11}x\|^2\!\!\!\!\!&\leq a_{11}^2\|x\|^2+b_{11}^2\|T_{12}^{*}x\|^2,\quad &x\in \dom(T_{12}^*)=P_+\dom(T),\\[1mm]
 \|A_{22}y\|^2\!\!\!\!\!&\leq a_{22}^2\|y\|^2+b_{22}^2\|T_{12}y\|^2,\quad &y\in\dom(T_{12})= P_-\dom(T).
\end{array}
\end{equation}
\vspace{-2mm}If
\begin{equation}
\label{jan04}
b_{11}b_{22}<1 \quad \mbox{and} \quad 
\sqrt{a_{11}^2+b_{11}^2\be_T^2} \sqrt{a_{22}^2+b_{22}^2\be_T^2} < \be_T^2,
\end{equation}
then $T\!+\!A$ has a stable spectral free strip $(-\beta^{\oplus}_{T\!+\!A},\beta^{\oplus}_{T\!+\!A})\!+\!\I\R \!\subset\! \C$, 
more precisely,
\beq
\label{noel15}
\sigma(T+sA)\cap\SSet{z\in\C}
{|\re z|<\beta^{\oplus}_{T+A}} = \emptyset, \quad s\in[0,1],
\eeq
with $ \beta^{\oplus}_{T+A}\in (0,\beta_T]$ given by
\[
  \beta^{\oplus}_{T+A}\!:=\!\sqrt{\be_T^2\!+\!\left(\frac{\sqrt{a_{11}^2\!+\!b_{11}^2\be_T^2}\!-\!\sqrt{a_{22}^2\!+\!b_{22}^2\be_T^2}}{2}\right)^{\!\!2}}
  \!-\frac{\sqrt{a_{11}^2\!+\!b_{11}^2\be_T^2}\!+\!\sqrt{a_{22}^2+b_{22}^2\be_T^2}}{2}.
\]
\end{theorem}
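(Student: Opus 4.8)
The plan is to follow the pattern of the proofs of Theorems~\ref{thm. diag. dom. bom.} and~\ref{posbound}: write $T+A-\lambda$ as an operator matrix and factorize it, only now the \emph{off-diagonal} operator $T$ (rather than a diagonal one) plays the role of the invertible ``core'' of the Frobenius--Schur factorization, and the ensuing Schur complement is estimated by resolvent bounds uniform in $\im\lambda$.

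\textit{Preliminaries.} Since $\sigma(T)\cap(-\beta_T,\beta_T)=\emptyset$, the self-adjoint operator $T$ satisfies $0\in\rho(T)$, and since $T$ is off-diagonal this forces $T_{12}^*T_{12}\ge\beta_T^2$ on $\H_-$ and $T_{12}T_{12}^*\ge\beta_T^2$ on $\H_+$; hence $T_{12}$ and $T_{12}^*$ are boundedly invertible with $\|T_{12}^{-1}\|,\|(T_{12}^*)^{-1}\|\le\beta_T^{-1}$. Moreover, replacing $A$ by $sA$, $s\in[0,1]$, only replaces $a_{11},a_{22},b_{11},b_{22}$ by $sa_{11}\le a_{11}$, etc., which still satisfy \eqref{jan04} and, since $\beta^\oplus_{T+A}$ is non-increasing in each of $\sqrt{a_{11}^2+b_{11}^2\beta_T^2}$ and $\sqrt{a_{22}^2+b_{22}^2\beta_T^2}$, yield $\beta^\oplus_{T+sA}\ge\beta^\oplus_{T+A}$; hence it suffices to prove \eqref{noel15} for $s=1$, the argument applying verbatim to each $sA$.

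\textit{The factorization.} For $\lambda\in\C$ I would use, on $\dom(T)=\dom(T_{12}^*)\oplus\dom(T_{12})$, the Frobenius--Schur-type factorization
\[
 T+A-\lambda=\begin{pmatrix}I&(A_{11}-\lambda)(T_{12}^*)^{-1}\\[1mm](A_{22}-\lambda)T_{12}^{-1}&I\end{pmatrix}\begin{pmatrix}0&T_{12}\\T_{12}^*&0\end{pmatrix},
\]
whose second factor is $T$ itself, and therefore boundedly invertible. Consequently $\lambda\in\rho(T+A)$ if and only if the first factor, an off-diagonal bounded perturbation of the identity, is boundedly invertible, and by the usual Schur-complement criterion this holds as soon as $\|(A_{11}-\lambda)(T_{12}^*)^{-1}(A_{22}-\lambda)T_{12}^{-1}\|<1$ (or the analogous product taken in the other order).

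\textit{The estimate (the main obstacle).} The delicate point is to bound this norm by a quantity depending only on $|\re\lambda|$, not on $|\lambda|$, so that the spectral-free region is the vertical strip and not merely a disc; this is where one must use that $T$, and not merely $T_{12}$, is self-adjoint with $\sigma(T)\subset(-\infty,-\beta_T]\cup[\beta_T,\infty)$. Writing the polar decomposition $T_{12}=V|T_{12}|$ with $V$ unitary (possible since $T_{12}$ is boundedly invertible) and $L:=|T_{12}|\ge\beta_T$, the conditions \eqref{non-symm-rel-bdd} say that $B_1:=V^*A_{11}V$ and $B_2:=A_{22}$ are $L$-bounded with constants $(a_{11},b_{11})$ and $(a_{22},b_{22})$, and the operator above becomes unitarily equivalent to $(B_1-\lambda)L^{-1}(B_2-\lambda)L^{-1}$ on $\H_-$. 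Estimating its norm through resolvent bounds of the type used for \eqref{jan21} — exploiting $|\ell-\lambda|\ge|\ell-\re\lambda|=\ell-\re\lambda$ for $\ell\in\sigma(L)\subset[\beta_T,\infty)$ and $|\re\lambda|<\beta_T$, and the worst case $\ell=\beta_T$ — leads to a bound of the form
\[
 \frac{\bigl(\sqrt{a_{11}^2+b_{11}^2\beta_T^2}+\varphi(\re\lambda)\bigr)\bigl(\sqrt{a_{22}^2+b_{22}^2\beta_T^2}+\varphi(\re\lambda)\bigr)}{\beta_T^2},
\]
and requiring this to be $<1$ amounts, after the elementary identities for roots of quadratic equations used in the proof of Theorem~\ref{posbound} (comp.\ \cite[Lemma~1.1]{KMM07}), to $|\re\lambda|<\beta^\oplus_{T+A}$, where $\beta^\oplus_{T+A}$ is the positive root of $(x+\sqrt{a_{11}^2+b_{11}^2\beta_T^2})(x+\sqrt{a_{22}^2+b_{22}^2\beta_T^2})=\beta_T^2$, i.e.\ precisely the quantity in the statement. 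The condition $b_{11}b_{22}<1$ enters exactly to guarantee that this root is positive (equivalently, that the bound is $<1$ at $\lambda=0$, which is the second inequality in \eqref{jan04}) and that the estimate does not deteriorate on the far part of $\sigma(L)$.
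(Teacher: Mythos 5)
Your factorization and the reduction of $\lambda\in\rho(T+A)$ to the invertibility condition $\|(A_{11}-\lambda)T_{12}^{-*}(A_{22}-\lambda)T_{12}^{-1}\|<1$ are sound and essentially coincide with the paper's route: the paper packages the same computation as a three-factor factorization through the second quadratic complement $Q_2(\lambda)=T_{12}-(A_{11}-\lambda)T_{12}^{-*}(A_{22}-\lambda)$ and invokes \cite[Corollary 2.3.8]{CT09}, but the resulting criterion is the same operator product. Your identification of $\beta^{\oplus}_{T+A}$ as the positive root of $\big(x+\sqrt{a_{11}^2+b_{11}^2\beta_T^2}\big)\big(x+\sqrt{a_{22}^2+b_{22}^2\beta_T^2}\big)=\beta_T^2$ is also correct.

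The genuine gap is exactly at the step you yourself flag as ``the main obstacle'', and it cannot be closed. In $(B_1-\lambda)L^{-1}(B_2-\lambda)L^{-1}$ the spectral parameter occurs only in the factors $B_i-\lambda$; there is no resolvent $(L-\lambda)^{-1}$ anywhere, so the inequality $|\ell-\lambda|\ge\ell-\re\lambda$ underlying \eqref{jan21} has nothing to act on. The only available estimate is $\|(B_i-\lambda)L^{-1}\|\le\|B_iL^{-1}\|+|\lambda|\,\|L^{-1}\|\le\big(\sqrt{a_{ii}^2+b_{ii}^2\beta_T^2}+|\lambda|\big)/\beta_T$, which depends on $|\lambda|$ and therefore yields only the disc $\{\lambda\in\C:|\lambda|<\beta^{\oplus}_{T+A}\}\subset\rho(T+A)$ --- and this is in fact all the paper's own proof establishes, its last line reading ``if $|\lambda|<\beta^{\oplus}_{T+A}$ then $\lambda\in\rho(T+A)$''. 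Your hoped-for bound with $\varphi(\re\lambda)$ in place of $|\lambda|$ cannot exist, because the strip assertion itself fails: take $\H_\pm=\C$, $T_{12}=1$ (so $\beta_T=1$), $A_{11}=\I c$ with $c=1.9$, $A_{22}=0$. Then \eqref{jan04} holds with $a_{11}=c$, $b_{11}=a_{22}=b_{22}=0$ and $\beta^{\oplus}_{T+A}=\sqrt{1+c^2/4}-c/2\approx0.429$, yet $T+A$ has eigenvalues $\lambda=\pm\tfrac12\sqrt{4-c^2}+\I c/2$ with $|\re\lambda|\approx0.312<\beta^{\oplus}_{T+A}$ (while $|\lambda|=1$, consistent with the disc version). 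So any correct completion of your argument can deliver only the disc, not the vertical strip; to salvage a strip one would need an additional hypothesis controlling $\im\lambda$ on the spectrum (e.g.\ a relative bound $<1$ as in Theorem \ref{new-non-symmetric gap}\,i)), which is precisely what this theorem dispenses with.
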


\begin{proof}
The assumptions on $T$ imply that $0\in \rho(T_{12}^*)$ and $\|T_{12}^{-*}\|=\frac 1{\beta_T}$. Hence we can use the factorization (see \cite[(2.2.16)]{CT})
\[
T+A-\lm=\begin{pmatrix}
I&(A_{11}-\lm)T_{12}^{-*}\\0&I
\end{pmatrix}
\begin{pmatrix}
0&Q_2(\lm)\\T_{12}^*&0
\end{pmatrix}
\begin{pmatrix}
I&T_{12}^{-*}(A_{22}-\lm)\\0&I
\end{pmatrix}
\]
valid on $\dom(T)$; here $Q_{2}(\lm)$ is the so-called second quadratic complement of the operator matrix $T+A$ (see \cite[Definition 2.2.20]{CT09}) given by
\[
Q_2(\lm)\!:=\!T_{12}-(A_{11}\!-\!\lm)T_{12}^{-*}(A_{22}\!-\!\lm),\quad \dom(Q_2(\lm)):=\dom(T_{12}), 
\]
for $\lm\in\C$. By \cite[Corollary 2.3.8]{CT09}, we have
\begin{equation}
\label{christmas15}
\lm\in\rho(T\!+\!A)\iff 0\in\rho(Q_2(\lm))\iff 1\in\rho\left((A_{11}\!-\!\lm)T_{12}^{-*}(A_{22}\!-\!\lm)T_{12}^{-1}\right)\!.
\end{equation}
By \eqref{non-symm-rel-bdd}, we can estimate 
\[\begin{split}
 \|(A_{11}-\lm)T_{12}^{-*}(A_{22}-\lm)T_{12}^{-1}\|
 &\leq (\|A_{11}T_{12}^{-*}\|+|\lm|\|T_{12}^{-*}\|)(\|A_{22}T_{12}^{-1}\|+|\lm|\|T_{12}^{-1}\|)\\
  &\leq\left(\frac{\sqrt{a_{11}^2\!+\!b_{11}^2\be_T^2}}{\be_T}+\frac{|\lm|}{\be_T}\right)
 \left(\frac{\sqrt{a_{22}^2\!+\!b_{22}^2\be_T^2}}{\be_T}+\frac{|\lm|}{\be_T}\right)\!.
\end{split}\]
If $|\lm|< \beta_{T+A}^{\oplus}$, then this upper bound is $<1$ and hence $\lm\in\rho(T+A)$ by \eqref{christmas15}. 
Note that $\beta_{T+A}^{\oplus} \le \beta_T$ and that $\beta_{T+A}^{\oplus}>0$ due  to assumption \eqref{jan04}.
\vspace{-3mm}
\end{proof}

\vspace{2mm}

\begin{remark}
Theorem \ref{posbound} refines and extends Corollary \ref{cor. semi-bounded}, and hence Kato's semiboundedness stability result \cite[Theorem V.4.11]{Ka},
while Theorem \ref{gap thm. off-diag. dom.} refines and extends Corollary \ref{symmetric gap}, i.e.\ Theorem \ref{new-non-symmetric gap} for a symmetric spectral gap.
In fact, since different relative boundedness constants may be chosen for $A_{11}$, $A_{22}$ in \eqref{non-symm-rel-bdd-1} and \eqref{non-symm-rel-bdd}, respectively,
we only have to impose conditions on the products in \eqref{jan04} and not on the factors; e.g.\ we can allow for perturbations with relative bound $>1$ if the product of the column-wise relative bounds is $<1$. If we choose $a_{11}\!=\!a_{22}\!=:\!a$, $b_{11}\!=\!b_{22}\!=:\!b$, and in Theorem \ref{posbound} $\beta_{T\!,1}=\beta_{T\!,2}=:\beta_T$, then $\delta _{T\!+A\!}^\oplus = \sqrt{a^2+b^2\beta_T^2}$ and $\beta_{T\!+A\!}^\oplus = \beta_{T+A} = \beta_T - \sqrt{a^2+b^2\beta_T^2}$;
so in this case Theorem~\ref{posbound} and Theorem \ref{gap thm. off-diag. dom.} coincide with Corollary \ref{cor. semi-bounded} 
and Corollary~\ref{symmetric gap},~respectively.
\end{remark}



\section{Applications}
\label{Section Applications}

In this final section we illustrate our results by several applications, including massless and massive Dirac operators with complex potentials, point-coupled periodic systems on manifolds with infinitely many spectral gaps, and two channel scattering Hamiltonians with dissipation.

\subsection{Massless and massive Dirac operators}

The massive Dirac operator is the prototype of a non-semi-bounded self-adjoint operator with spectral gap. Recently, the massless Dirac operator in two dimensions has gained particular interest since it appears as an effective Hamiltonian near the so-called Dirac cones in single layer graphene, see \cite{GeimNovoselov}. 

Using natural units (i.e.\ $\hbar=c=1$), the Dirac operator $H_0$ of a free relativistic particle of mass $m\ge 0$  in dimension $d=2$ or $d=3$ is 
the self-adjoint operator given by
\begin{align}\label{Dirac operator}
H_0=
\begin{cases}
-\I \sum_{j=1}^{2}\sigma_j\,\partial_j,\quad &  \dom(H_0)=W^{2,1}(\R^2,\C^2), \qquad d=2,\\[2mm]
-\I \sum_{j=1}^{3}\al_j\,\partial_j+m\,\be, \quad & \dom(H_0)=W^{2,1}(\R^3,\C^4), \qquad d=3,
\end{cases}
\end{align}  
in $L^2(\R^2,\C^2)$ and $L^2(\R^3,\C^4)$, respectively. Here 
$\sigma_1$, $\sigma_2$, $\sigma_3 \in M(2,\C)$ and $\al_1$, $\al_2$, $\al_3$, $\be \in M(2,\C)$ 
are the Pauli and Dirac matrices, respectively.
It is well-known that $H_0$ has purely absolutely continuous spectrum $\sigma(H_0)=(-\infty,-m]\cup[m,\infty)$, see e.g.\ \cite{Th}; in particular, $H_0$ has a spectral gap if and only if $m > 0$.

\medskip

\noindent
\textbf{Massless Dirac operator in $\R^2$.} 
Assume that $V:\R^2\to M(2,\C)$ is a measurable matrix-valued function with
\[
V\in L^p(\R^2,M(2,\C))\quad \mbox{for some $p>2$}.
\]
Using H\"older and Hausdorff-Young inequality (see e.g.\ the proof of \cite[Satz 17.7]{Weid2}), one obtains the family of inequalities
\beq\label{eq. Weidmann inequality Dirac}
\|Vf\|^2\leq a_{p}(t)^2\|f\|^2+b_{p}(t)^2\|H_0 f\|^2,\quad t>0,
\eeq
of the form \eqref{c2} with
\begin{align*}
a_{p}(t):=C_p t^{-\frac{2}{p}},\quad b_{p}(t):=C_p t^{\frac{p-2}{p}},\quad C_p:=\|V\|_p\,(2\pi)^{-2/p}\left(\frac{2\pi}{p-2}\right)^{1/p}.
\end{align*}
Since $b_p$ is a strictly monotonically increasing function, we can solve for $t$ and re-parametrise~$a_p$; the family of inequalities \eqref{eq. Weidmann inequality Dirac} can then be written as
\beq
\label{last!}
\|Vf\|^2\leq a_{p}(b_p )^2\|f\|^2+b_p ^2\|H_0 f\|^2,\quad a_p(b_p ):=C_p^{\frac{p}{p-2}}b_p ^{-\frac{2}{p-2}},\quad b_p >0.
\eeq
Theorem \ref{new-non-symmetric gap} i) yields that 
\[
\sigma(H_0+V)\subset\bigcap_{0<b_p <1}\SSet{z\in\C}{|\im z|^2\leq \frac{a_{p}(b_p )^2+b_p ^2|\re z|^2}{1-b_p ^2}}.
\]
The envelope of the set on the right hand side may be computed by solving the system of equations
\begin{alignat*}{2}
(1-b_p ^2)y&=a_{p}(b_p )^2+b_p ^2x, \quad 
-b_p y&=a_{p}(b_p )\,a_{p}'(b_p )+b_p x
\end{alignat*}
for the variables $x\!=\!|\re z|^2\!$, $y\!=\!|\im z|^2\!$. The re\-sulting curve has the parametrization
\begin{align*}
x(b)&=\left(\frac{C_p}{b_p }\right)^{\frac{2p}{p-2}}\frac{2-b_p ^2 p}{p-2}, \quad  
y(b)=\frac{p\,C_p^2}{p-2}\left(\frac{C_p}{b_p }\right)^{\frac{4}{p-2}}, \qquad 0<b_p<1.
\end{align*}
The region bounded by this curve, which contains the spectrum of the Dirac operator $H_0+V$ with $V\in L^p(\R^2,M(2,\C))$, is displayed in
Figure \ref{fig. massless} for $p=5$ and $p=7$. 
Note that larger $p$ give tighter estimates for the spectrum; asymptotically, i.e.\ for $b_p\searrow 0$, the curves are of the form 
$$
|\im z|=C_p\left(\frac{p}{p-2}\right)^{\frac{p-2}{2p}} \left(\frac p2 \right)^{\frac 1p} |\re z|^{\frac{2}{p}}
= \sqrt{\frac{p}{p-2}} (4\pi)^{-\frac 1p} \|V\|_p \,|\re z|^{\frac{2}{p}}.
$$
In the limit case $p \!\to\! \infty$, the region bounded by the above curves degenerates into the horizontal strip 
$|\im z| \!\le\! \|V\|_\infty$; for $p \!\searrow\! 2$ it degenerates into the complex \vspace{-2mm}plane.

\begin{figure}[h]\label{fig. massless}
\begin{center}
\includegraphics[scale=.17]{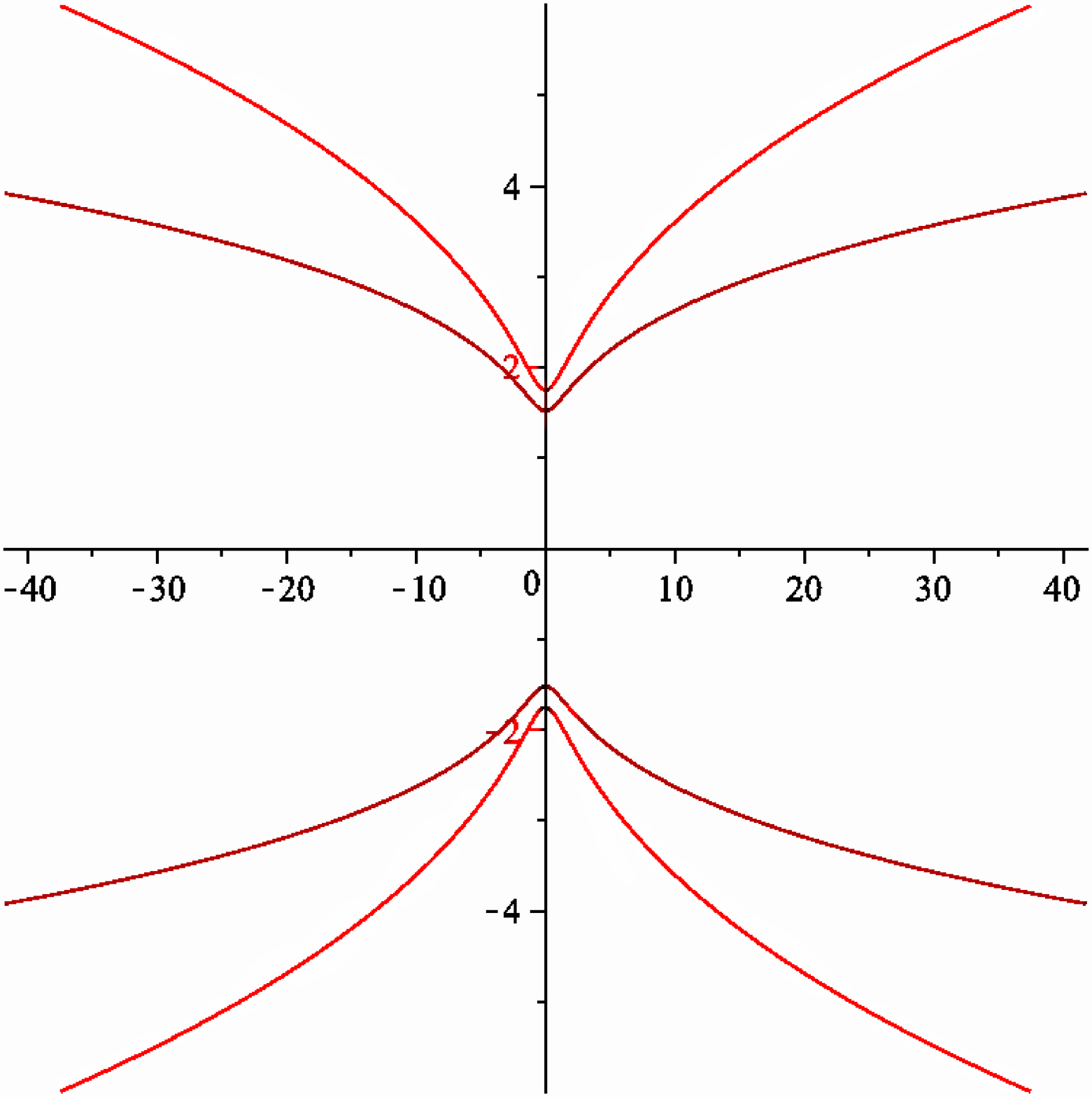} 
\vspace{-1mm}
\end{center}
\caption{Curves bounding the spectrum of the massless Dirac operator in $\R^2$ for $p=5$ (outer curve)  and $p=7$ (inner curve).}
\end{figure}

\medskip

\noindent
\textbf{Massive Dirac operator with Coulomb-like potential in  {\boldmath $3$} dimensions.}
Assume that $V:\R^3\to M(4,\C)$ is a measurable (possibly non-hermitian) matrix-valued function such that
\[
\|V(x)\|_{M(4,\C)}^2\leq C_1^2+C_2^2\,|x|^{-2}\quad \mbox{for almost all } x\in\R^3,
\]
where $\|\cdot\|_{M(4,\C)}$ denotes the operator norm of the matrix $V(x)$ in $\C^4$ equipped with the Euclidean scalar product, and $C_1$, $C_2\geq 0$.
Theorem \ref{symmetric gap} and Hardy's inequality imply that, if $\sqrt{C_1^2+4 C_2^2m^2}<m$, 
then $H=H_0+V$ is bisectorial and 
\begin{align*}
 \sigma(H)\subset&\SSet{z\in\C}{|\re z|\geq m-\sqrt{C_1^2+4 C_2^2 m^2},\,|\im z|^2\leq \frac{C_1^2+4 C_2 |\re z|^{2}}{1-4 C_2^2}}.
\end{align*}
Thus the spectrum of $H=H_0+V$ remains separated into two parts; in particular, if $V$ is hermitian and hence $\sigma(H)\subset\R$, the electronic and positronic part of the spectrum of the Dirac operator remain separated. 

\begin{remark}
i) Analogous spectral enclosures hold for Dirac operators in $\R^d$. 

ii) The problem of decoupling the corresponding electronic and positronic spectral subspaces, i.e.\ 
of finding a generalized Foldy-Wouthuysen transformation for Dirac operators in $\R^d$ with unbounded potentials, was studied in~\cite{Cuenin11}.

iii) For $d=1$, the Birman-Schwinger principle together with the explicit form of the resolvent kernel of~$H_0$ were used to prove more refined estimates for the \emph{eigenvalues} of Dirac operators with non-hermitian potentials in~\cite{MR3177918}.
\end{remark}

\subsection{Point-coupled periodic systems on manifolds}
\label{subsec:7.2} 

The spectrum of Schr\"o\-din\-ger operators in $\R^d$ with periodic potentials typically exhibits a band-gap structure. 
Generically, the number of gaps is infinite for $d=1$ and finite for $d>1$; this so-called Bethe-Sommerfeld conjecture was proved for a general class of smooth
potentials for $d=2,3$, see e.g.~\cite{MR518334,MR549367}. 

However, in the case of singular potentials, the number of gaps may be infinite also for $d\!>\!1$. 
Moreover, while the band to gap ratio generally tends to $\infty$ at high energies, 
the situation is reversed for free quantum motion on periodic manifolds, as shown in \cite{BEG03}. 
The manifolds consist of two-dimensional spheres,
connected either by points where two spheres touch (case~1) or by line segments (case~2). 

The Hilbert space of the system is an infinite number of copies of $L^2(X,\C)$ where $X$ is the two-sphere $\mathbb{S}^2$ (case 1) or $\mathbb{S}^2$ with a line segment $[0,d)$ attached (case 2). On this Hilbert space we consider the operator
\begin{align*}
\widehat{S}=\bigoplus_{m\in\Z}(-\Delta_{X}) \quad \mbox{in} \ \ \H := \bigoplus_{m\in\Z} L^2(X,\C), 
\\[-9mm]
\end{align*}
\vspace{-1mm}where
\begin{alignat*}{3}
&\Delta_{X}:=-\Delta_{\mathbb{S}^2} &&\quad \mbox{in} \ \ L^2(\mathbb{S}^2,\C) &&\quad\mbox{(case 1),}\\[-1mm]
&\Delta_{X}:=(-\Delta_{\mathbb{S}^2})\oplus \left(-\frac{\rd^2}{\rd x^2}\right) &&\quad \mbox{in} \ \ L^2(\mathbb{S}^2,\C)\oplus L^2([0,d),\C) &&\quad\mbox{(case 2),}
\end{alignat*}
and $-\Delta_{\mathbb{S}^2}$ is the Laplace-Beltrami operator on $\mathbb{S}^2$. 
With domain of $\Delta_{X}$ chosen as the set of functions in $W^{2,2}(X,\C)$  that vanish at two distinct points on $\mathbb{S}^2$ (case~1) or at one point of $\mathbb{S}^2$ and at $d$ (case~2), $\widehat{S}$ is a symmetric operator with infinite deficiency indices. 

If a self-adjoint extension $S$ of $\widehat{S}$ is selected by means of local boundary conditions reflecting the necklace geometry of the system, see \cite[Section~3]{BEG03} for details,
then the gap and band lengths $l_n$ and $w_n$ of the spectrum of $S$ satisfy 
\begin{alignat}{4}\label{eq. band gap periodic manifolds}
&l_n \thickapprox n^2,\quad &&w_n\thickapprox n^2(\log n)^{-\epsilon} &&\quad\mbox{(case 1)},\\
\label{eq. band gap periodic manifolds-2}
&l_n \thickapprox n^2,\quad &&w_n\thickapprox n^{2-\epsilon} &&\quad\mbox{(case 2),}
\end{alignat}
for some $\epsilon\in(0,1)$. 

Consider a perturbation $V=\bigoplus_{m\in\Z}V_m$ with potentials $V_m:X\to\C$ such that, in both cases, ${\rm supp}\,V_m \subset \mathbb{S}^2$ and
\begin{equation}
\label{pot}
c:=\sup_{m\in\Z}\|V_m\|_{L^p(\mathbb{S}^2)}<\infty \quad \mbox{ for some } \ p\in(2,\infty).
\vspace{-2mm}
\end{equation}

We can estimate the relative boundedness constants of $V_m$ with respect to $-\Delta_{\mathbb{S}^2}$ in \eqref{c2} using the following interpolation inequality. The latter was established in~\cite{MR1134481} in the more general context of compact Riemannian manifolds with uniformly positive Ricci curvature; we state the inequality as presented e.g.\ in \cite{MR3011461}. If $\mu$ denotes the normalized surface measure on $\mathbb{S}^2$, then, for all $q\in(2,\infty)$,
\begin{align}\label{eq. interpolation inequality on the sphere}
\left(\int_{\mathbb{S}^2}|u|^q\,\rd\mu\right)^{2/q}\leq \frac{q-2}{2} \int_{\mathbb{S}^2}|\nabla_{\mathbb{S}^2} u|^2\,\rd\mu+\int_{\mathbb{S}^2}|u|^2\,\rd\mu,\quad u\in H^1(\mathbb{S}^2,\rd\mu).
\end{align}
It easily follows from \eqref{eq. interpolation inequality on the sphere} that, for $m\in\Z$, $n\in\N$, and $u\in\dom(-\Delta_{\mathbb{S}^2})$,
\begin{align*}
\|V_m u\|_{L^2(\mathbb{S}^2)}^2&\leq \|V_m\|_{L^p(\mathbb{S}^2)}^2\|u\|_{L^{\frac{2p}{p-2}}(\mathbb{S}^2)}^2\\
&\leq c^2(4\pi)^{-2/p}\left(\frac{1}{p-2}\|\nabla_{\mathbb{S}^2} u\|_{L^{2}(\mathbb{S}^2)}^2+\|u\|_{L^{2}(\mathbb{S}^2)}^2\right)\\
&\leq c^2(4\pi)^{-2/p}\left(\frac{1}{(p-2)n^2}\|\Delta_{\mathbb{S}^2} u\|_{L^{2}(\mathbb{S}^2)}^2+\left(1+\frac{n^2}{p-2}\right)\|u\|_{L^{2}(\mathbb{S}^2)}^2\right).
\end{align*}
Hence, the relative boundedness constants of $V$ with respect to $S$ in \eqref{c2} satisfy 
\[
  a_n\thickapprox n^2,  \quad b_n\thickapprox n^{-2}.
\]
Thus we are in the situation of Example \ref{cor. 2 to asymptotics} which shows that, if the potential satisfies~\eqref{pot}, then
there exists $\eps_0\!>\!0$ such that  $S\!+\!\eps V$, $0\!\le\! \eps\!\le\!\eps_0$, has infinitely many stable spectral free strips; more precisely,
at most finitely many $($finite$)$ spectral gaps of $\,S$ do not give rise to stable spectral free strips of $\,S+\eps V$.

\subsection{Two-channel scattering with dissipation}

The Hamiltonian of a non-relativistic two-channel potential scattering model in $\R^d$ is given by
(in the centre of mass frame)
\beqnt
H=\begin{pmatrix}
   -\Delta+x^2&V_{12}\\ V_{21}&-\Delta 
  \end{pmatrix}
=:\begin{pmatrix}
   H_C&V_{12}\\ V_{21}&H_S
  \end{pmatrix}
\eeqnt 
in the Hilbert space $L^2(\R^d,\C)\oplus L^2(\R^d,\C)$
where all masses, as well as the Planck constant $\hbar$, have been set to unity. We assume that the Hamiltonian $H_C=-\Delta+x^2$ in the confined channel, governing the relative motion between two permanently confined particles (e.g.\ a quark and an antiquark), is a harmonic oscillator, while $H_S = -\Delta$ in the scattering channel is the free Laplacian.

Then both $H_C$ and $H_S$ are self-adjoint in $L^2(\R^d)$ and bounded below. The spectrum of $H_C$ is discrete, $\sigma(H_C)= \SSet{2n+d}{n\in\N_0}$,
while the spectrum of $H_S$ is absolutely continuous, $\sigma(H_S)=[0,\infty)$, and hence 
\begin{equation}
\label{newyear2016}
 \be_{H,1}= \min \sigma(H_C) = d, \quad \be_{H,2}= \min \sigma(H_S) = 0.  
\end{equation}

The communication between the two channels is mediated by the off-diagonal potentials $V_{12}$, $V_{21}$; if $V_{21}\neq\overline{V_{12}}$, which corresponds to a scattering process with dissipation, then the two-channel Hamiltonian $H$ is not self-adjoint.
We assume~that
\begin{align*}
V_{12}\in L^p(\R^d)\quad \mbox{for some } p>d/2,\, p\geq 2,
\end{align*}
and that there are $p_0$, $p_{1,\alpha}$, $p_{2,\alpha}\!\geq\! 0$, such that, with the usual multi-index~notation,
\begin{align}\label{eq. assumption on potential for harmonic oscillator}
|V_{21}(x)|\leq p_0+\sum_{|\alpha|=1}p_{1,\alpha}x^{\alpha}+\sum_{|\alpha|=2}p_{2,\alpha}x^{\alpha}.
\end{align}
In terms of the creation and annihilation operators
\[
A_j=\Big(\frac{\partial}{\partial x_j}+x_j\Big),\quad A_j^*=\Big(-\frac{\partial}{\partial x_j}+x_j\Big),\quad j=1,\ldots,d,
\]
the harmonic oscillator $H_C$ and the monomials $x_j$ can be written as
\begin{align}\label{eq. harmonic oscillator and x with creation/annihilation operators}
H_C=\sum_{j=1}^d A_j^*A_j+d,\quad x_j=\frac{1}{2}\left(A_{j}+A^*_{j}\right). 
\end{align}
Then, for $f\in\dom(H_C)$, 
\begin{align*}
\|H_C f\|^2=\sum_{j,k=1}^d\|A_jA_k f\|^2+2d\sum_{j=1}^d\|A_j f\|^2+d^2\|f\|^2,
\end{align*}
and hence, if $A_j^{\#}$ is either $A_{j}$ or $A^*_{j}$, 
\begin{align*}
\|A_j^{\#}A_k^{\#}f\|^2\leq \|H_C f\|^2,\quad \|A_j^{\#} f\|^2\leq \frac{1}{2d}\|H_C f\|^2, \quad \|f\|^2\leq \frac{1}{d^2}\|H_C f\|^2.
\end{align*}
This, together with \eqref{eq. assumption on potential for harmonic oscillator} and 
\eqref{eq. harmonic oscillator and x with creation/annihilation operators} implies that
\begin{align}\label{eq. b21 channel}
\|V_{21}f\|^2\leq b_{21}^2\|H_C f\|^2,\quad b_{21}^2:=\frac{2p_0^{2}}{d^2}+\frac{1}{d}\sum_{|\alpha|=1}p_{1,\alpha}^{2}+2\sum_{|\alpha|=2}p_{2,\alpha}^{2}.
\end{align}
On the other hand, in analogy to \eqref{eq. Weidmann inequality Dirac}, \eqref{last!}, for $f\in\dom(H_S)$,
\begin{align*}
\|V_{12}f\|^2 
\!\leq\! a_{12,p}^{2}(b_{12,p})\|f\|^2\!\!+\!b_{12,p}^2\|H_S f\|^2\!, 
\ \  
a_{12,p}(b_{12,p})\!:=\!C_p^{\frac{2p}{2p-d}}b_{12,p}^{-\frac{d}{2p-d}}\!,\ \ b_{12,p}\!>\!0,
\end{align*}
\vspace{-1mm} with 
\begin{align}\label{eq. cp channel}
C_p:=\|V_{12}\|_p(2\pi)^{-d/p}\frac{2\pi^{d/2}}{\Gamma(d/2)}B(d/2,p-d/2),
\end{align}
where $B$, $\Gamma$ denote the Beta- and Gamma-function.
Together with \eqref{newyear2016}, 
Theorem~\ref{posbound} yields that
\begin{align*}
{\rm Re}\,\sigma(H)\geq \,-\!\!\inf_{0< b_{12,p} < b_{21}^{-1}}\left\{ b_{21}d\,a_{12,p}(b_{12,p})\tan\!\left(\frac{1}{2}\arctan\frac{2b_{21}d\, a_{12,p}(b_{12,p})}{d}\right)\right\}.
\end{align*}
Since $a_{12,p}$ is monotonically decreasing, the infimum is approached in the limit $b_{12,p}\to b_{21}^{-1}$. 
Altogether, the two-channel Hamiltonian $H$ is m-accretive with 
\begin{align*}
{\rm Re}\,\sigma(H)&\geq -b_{21}^{\frac{2p}{2p-d}}d\,C_p^{\frac{2p}{2p-d}}\tan\!\bigg(\frac{1}{2}\arctan\frac{2b_{21}^{\frac{2p}{2p-d}}d\,C_p^{\frac{2p}{2p-d}}}{d}\bigg),
\end{align*}    
where $b_{21}$ and $C_p$ are as in \eqref{eq. b21 channel} and \eqref{eq. cp channel}, respectively. 
We mention that, even in the case of symmetric potential $V_{21}=\overline{V_{12}}$, this bound is tighter than the bound obtained from 
Kato's semiboundedness stability result \cite[Theorem V.4.11]{Ka}. 

\bigskip
{\small

\noindent
{\bf Acknowledgements.} 
{\small The first author gratefully acknowledges the support of Schweizerischer Nationalfonds, SNF, through the postdoc stipends
PBBEP2$\_136596$ and P300P2\_ 147746; the second author thanks for the support of SNF, within grant no.\ 200020$\_146477$.
}

\bibliographystyle{plain}
\bibliography{Bibliography}

}

\end{document}